\newenvironment{alginc}[1][pseudocode]{\medskip\algsetlanguage{#1}\begin{algorithmic}[0]}{\end{algorithmic}\medskip}
\newcommand{\op}{\operatorname}
\newcommand{\C}{\mathbb C}
\newcommand{\Z}{\mathbb Z}
\newcommand{\Afour}{\mathcal{A}_4}
\newcommand{\Sfour}{{\mathcal{S}}_4}
\newcommand{\Dtwo}{{\mathcal{D}}_2}
\newcommand{\Dsix}{{\mathcal{D}}_6}
\newcommand{\Dfour}{{\mathcal{D}}_4}
\newcommand{\Dthree}{{\mathcal{D}}_3}
\newcommand{\CtwoSq}{(\Z/2\Z)^2}
\newcommand{\Ctwo}{\Z/2\Z}
\newcommand{\Deight}{{\mathcal{D}}_8}
\newcommand{\CtwoCube}{(\Z/2\Z)^3}
\newcommand{\Cfour}{\Z/4\Z}
\newcommand*{\Homol}{\operatorname{H}}
\renewcommand{\le}{\leqslant}
\renewcommand{\ge}{\geqslant}
\renewcommand{\geq}{\geqslant}
\newcommand{\edgegraph}{
\begin{pspicture}(-0.3,-0.1)(0.7,0.3)
\psdots(-0.2,0.0)
\psdots(0.6,0.0)
\psline(-0.2,0.0)(0.6,0.0)
\end{pspicture} }
\newcommand{\doubleedgegraph}{
\begin{pspicture}(-0.3,-0.1)(1.5,0.3)
\psdots(-0.2,0.0)
\psdots(1.4,0.0)
\psline(-0.2,0.0)(1.4,0.0)
\end{pspicture} }
\newtheorem{theorem}{\bfseries Theorem}
\newtheorem{proposition}[theorem]{\bfseries Proposition}
\newtheorem{corollary}[theorem]{\bfseries Corollary}
\newtheorem{lemma}[theorem]{\bfseries Lemma}
\theoremstyle{definition}
\newtheorem{definition}[theorem]{\bfseries Definition}
\newtheorem{observation}[theorem]{\bfseries Observation}
\newtheorem{remark}[theorem]{\bfseries Remark}
\newtheorem{examplecore}[theorem]{Example}
\begin{document}

\title[Farrell--Tate and Bredon homology for a modular group of rank 4]
{The Farrell--Tate and Bredon homology for $\op{PSL}_4(\Z)$ \\ via cell subdivisions}

\author[Bui A. T. and A. D. Rahm and M. Wendt]{Anh Tuan Bui, Alexander D. Rahm and Matthias Wendt}
\address{Anh Tuan Bui, 
University of Science - Ho Chi Minh City, Faculty of Math \& Computer Science,
227 Nguyen Van Cu St., District 5, Ho Chi Minh City,
Vietnam \hfill \texttt{batuan@hcmus.edu.vn}
}
\address{Alexander D. Rahm, 
Mathematics Research Unit of Universit\'e du Luxembourg,
Maison du Nombre, 6, Avenue de la Fonte,
L-4364 Esch-sur-Alzette \hfill
\texttt{Alexander.Rahm@uni.lu}
}
\address{Matthias Wendt,
 Mathematisches Institut, Albert-Ludwigs-Universit\"at Freiburg,
 Ernst-Zermelo-Str. 1, 79104 Freiburg im Breisgau, Germany \hfill
\texttt{m.wendt.c@gmail.com} 
}
  
\subjclass[2010]{11F75}
\keywords{Cohomology of arithmetic groups}
\date{\today}

\begin{abstract}
We provide some new computations of Farrell--Tate and Bredon (co)homology for arithmetic groups. 
For calculations of Farrell--Tate or Bredon homology, one needs cell complexes where cell stabilizers fix their cells pointwise. 
We provide two algorithms computing an efficient subdivision of a complex to achieve this rigidity property. 
Applying these algorithms to available cell complexes for $\op{PSL}_4(\Z)$ 
provides computations of Farrell--Tate cohomology for small primes as well as the Bredon homology for the classifying spaces of proper actions with coefficients in the complex representation ring.  
\end{abstract}

\maketitle

\section{Introduction}

Understanding the structure of the cohomology of arithmetic groups is a very important problem with relations to number theory and various K-theoretic areas. 
Explicit cohomology computations usually proceed via the study of the actions of the arithmetic groups on their associated symmetric spaces, 
and recent years have seen several advances in algorithmic computation of equivariant cell structures for these actions. 
To approach computations of Farrell--Tate and Bredon (co)homology of arithmetic groups, one needs cell complexes having a \emph{rigidity property}: 
Cell stabilizers must fix their cells pointwise. 
The known algorithms (using Voronoi decompositions and such techniques, cf. e.g.~\cites{EGS, DGGHSY}) do not provide complexes with this rigidity property,  
and both for the computation of Farrell--Tate cohomology (resp. the torsion at small prime numbers in group cohomology) of arithmetic groups as well as for the computation of Bredon homology,
this lack of rigid cell complexes constitutes a significant bottleneck. 

 In theory, it is always possible to obtain this rigidity property via the barycentric subdivision. 
 However, the barycentric subdivision of an $n$-dimensional cell complex can multiply the number of cells by $(n+1)!$ and thus easily let the memory stack overflow. 
 We provide two algorithms, called Rigid Facets Subdivision, cf. Section~\ref{sec:rfs}, 
 and Virtually Simplicial Subdivision, cf. Section~\ref{Virtually Simplicial Subdivision},
 as well as a combination of them (Hybrid Subdivision, cf. Section~\ref{hybrid subdivision})
 which subdivide cell complexes for arithmetic groups such that stabilizers fix their cells pointwise, 
 but only lead to a controlled increase (in terms of sizes of stabilizer groups) in the number of cells, 
 avoiding an explosion of the data volume. An implementation of the algorithms, cf.~\cite{Bui}, 
 shows that cases like ${\rm PSL}_4(\Z)$ can effectively be treated with it, using commonly available machine resources. 
 For the sake of comparison, barycentric subdivison applied to the cell complex for $\op{PSL}_4(\Z)$ from~\cite{dutour:ellis:schuermann} 
 would produce 3540 times as many top-dimensional cells as Rigid Facets Subdivision does --- see Table~\ref{Numbers of cells for PSL4Z}.

\begin{table}
 \caption{Numbers of cells in the individual dimensions of the studied non-rigid fundamental domain $X$ for PSL$_4(\Z)$ from~\cite{dutour:ellis:schuermann}, its rigid facets subdivsion RFS$(X)$, 
 its virtually simplicial subdivision VSS$(X)$, its hybrid subdivision HyS$(X)$ and its barycentric subdivision BCS$(X)$.}
 \label{Numbers of cells for PSL4Z}
\begin{tabular}{|l|r|r|r|r|r|r|r|}
\hline &&&&&&&\\
Dimension            			&   0 &  1   &     2 &    3   &    4   &    5    &    6\\
\hline &&&&&&&\\
\# Orbits in $X$ 			  & 2 &    2 &     2 &    4   &    4   &    3    &    1\\
\# Cells in $X$     			  &304&  1416&  2040 & 1224   &  332   &  36     &   1\\
\# Orbits in  RFS$(X)$	  		   &17&   213&  1234 & 3025   & 3103   & 1117    &   1\\
\# Cells in  RFS$(X)$  			 &4153& 62592& 268440&472272  &370272  &108096   &  96\\
\# Orbits in  VSS$(X)$  		 &17  &  219 &  1508 & 5082   &  8456  &  6720   & 2040  \\
\#  {Cells in  VSS$(X)$}		 & 4153 & 71952 & 390936 & 974304 & 1238688 & 783360 & 195840 \\
\# Orbits in HyS$(X)$ 			  & 17  &  213  &   1245 &   3095 &    3214 &  1180  &    12\\
\# Cells in HyS$(X)$    		  &4153 &  62592&  271416&  483504&   383520& 114144 &  1152\\
\# Cells in BCS$(X)$ 			 &5353&110352& 644136&1658304 &2138688 & 1359360 & 339840 \\
\hline

\end{tabular}
\end{table}

\subsection{Computations of Farrell--Tate cohomology}
Farrell--Tate cohomology is a modification of cohomology of arithmetic groups which is particularly suitable to investigate torsion related to finite subgroups 
(in particular, the torsion in cohomological degrees above the virtual cohomological dimension). 
While the known cell complexes for arithmetic groups can deal very well with the rational cohomology and torsion at primes which do not divide orders of finite subgroups, 
computations with these complexes run into serious trouble for small prime numbers because the differentials in the relevant spectral sequence are too complicated to evaluate. 
There is a suitable new technique called {torsion subcomplex reduction}, cf.~\cite{accessingFarrell}, 
which produces significantly smaller cell complexes and therefore simplifies the equivariant spectral sequence calculations. To apply this simplification, however, one needs cell complexes with the abovementioned rigidity property. 
Applying Rigid Facets Subdivision to a cell complex for $\op{PSL}_4(\Z)$, 
we have computed the Farrell--Tate cohomology of ${\rm PSL}_4(\Z)$, at the primes $3$ and $5$.
These results can be found in Theorem~\ref{thm:psl4} and Proposition~\ref{5-torsion for PSL4Z}. 
Since the computation proceeds through a complete description of the reduced torsion subcomplex, 
we can compute the torsion above the virtual cohomological dimension in all degrees. 

In the cases which are effectively of rank one ($5$-torsion in $\op{PSL}_4(\Z)$), 
we can check the results of the cohomology computation using torsion subcomplex reduction by comparing to a computation using Brown's formula.

\subsection{Computations of Bredon homology} For any group $G$, Baum and Connes introduced a map from the equivariant $K$-homology of $G$ to the $K$-theory of the reduced $C^*$-algebra of $G$, 
called the assembly map.
For many classes of groups, it has been proven that the assembly map is an isomorphism; and the Baum--Connes conjecture claims that it is an isomorphism for all finitely presented groups $G$ 
(counter-examples have been found only for stronger versions of the Baum--Connes conjecture). The assembly map is known to be injective for arithmetic groups. 
For an overview on the conjecture, see the monograph~\cite{MislinValette}. 

The geometric-topological side of Baum and Connes' assembly map, namely the equivariant $K$-homology, 
can be determined using an Atiyah--Hirzebruch spectral sequence with $E_2$-page given by the Bredon homology $\Homol_n^\mathfrak{Fin}( \underbar{\rm E}G; \thinspace R_\C)$ of the classifying space  
$\underbar{\rm E}G$ for proper actions with coefficients in the complex representation ring $R_\C$ and with respect to the system $\mathfrak{Fin}$ of finite subgroups of $G$. 
This Bredon homology can be computed explicitly, as described by  Sanchez-Garcia~\cites{Sanchez-Garcia, Sanchez-Garcia_Coxeter}.

While for Coxeter groups with a small system of generators~\cite{Sanchez-Garcia_Coxeter} 
and arithmetic groups of rank~$2$~\cite{BianchiGroups}, 
general formulae for the equivariant $K$-homology have been established, 
the only known higher-rank case to date is the example 
$\op{SL}_3(\Z)$ in \cite{Sanchez-Garcia}. 
Although there are by now considerably more arithmetic groups for which cell complexes have been worked out \cites{EGS, dutour:ellis:schuermann, DGGHSY}, 
no further computations of Bredon homology $\Homol_n^\mathfrak{Fin}( \underbar{\rm E}G; \thinspace R_\C)$ 
have been done since 2008 because the relevant cell complexes fail to have the  rigidity property required for Sanchez-Garcia's method. 
We discuss an explicit example, cf. Section~\ref{2PEV}, demonstrating that the rigidity property is essential for the computation of Bredon homology and cannot be circumvented by a different method.

Applying rigid facets subdivision to the cell complex for $\underbar{\rm E}{\rm PSL}_4(\Z)$ from \cite{dutour:ellis:schuermann}, we obtain 
$$
\Homol_n^\mathfrak{Fin}( \underbar{\rm E}{\rm PSL}_4(\Z); \thinspace R_\C) 
\cong \begin{cases}
  0, & n \ge 4,\\
  \Z, & n = 3,\\
  0, & n = 2,\\
  \Z^4, & n = 1,\\
  \Z^{25} \oplus \Z/2, & n = 0.\\
\end{cases}
$$

This is consistent with the rational homology of $\underbar{\rm B}{\rm PSL}_4(\Z)$ as inferred from the results of \cite{dutour:ellis:schuermann}.
As further consistency checks, the authors have paid attention that the homology of the cell complex remains unchanged under our implementation of rigid facets subdivision, 
and that the equivariant Euler characteristic of $\underbar{\rm E}G$ vanishes before and after subdividing.

\subsection*{Organization of the paper.} In Section~\ref{sec:rfs}, we provide the Rigid Facets Subdivision algorithm.
In Section~\ref{Virtually Simplicial Subdivision}, we provide the Virtually Simplicial Subdivision algorithm.
In Section~\ref{Farrell--Tate}, we apply the Rigid Facets Subdivision algorithm to a $\op{PSL}_4(\Z)$, and compare the result with a computation using Brown's conjugacy classes cell complex.
Finally in Section \ref{2PEV}, we provide a counterexample in order to contradict the possibility to compute the Bredon homology from an arbitrary non-rigid cell complex.

\subsection*{Acknowledgements.} 
We are grateful for support by \mbox{Gabor Wiese}'s Fonds National de la Recherche Luxembourg grants (INTER/DFG/FNR/12/10/COMFGREP and AMFOR), 
which did facilitate meetings for this project via visits to Universit\'e du Luxembourg by the first author (thrice for one month) and the third author.
The first author was funded by Vietnam National University - Ho Chi Minh City (VNU-HCM) under grant number C2018-18-02.
We would like to thank Graham Ellis for having supported the development of the first implementation of our algorithms -- the ``Torsion Subcomplexes Subpackage'' for his 
\textsc{Homological Algebra Programming (HAP)} package in GAP.
Further thanks go to \mbox{Sebastian Sch\"onnenbeck}~\cites{BCNS, Sebastian} for having provided us cell complexes for congruence subgroups in $\op{SL}_3(\Z)$, used for benchmarking purposes in this paper,
and especially to Mathieu Dutour Sikiri\'c for having provided the cell complexes for  $\op{SL}_3(\Z)$, Sp$_4(\Z)$ and $\op{PSL}_4(\Z)$ in \textsc{HAP}.
\\
This article is dedicated to the memory of Aled Ellis.

\section{The rigid facets subdivision algorithm} \label{sec:rfs}
In this section, we discuss the rigid facets subdivision algorithm which rigidifies equivariant cell complexes. The core of the method is Algorithm~\ref{RFS}, 
which is expected to run in reasonable time
for input coming from cell complexes for arithmetic groups. The key fact which guarantees that rigid facets subdivision works, is Lemma~\ref{Rigid Facets Lemma} below.

\begin{algorithm} 
\caption{Subdivide to get stabilizers which fix their cells pointwise}
\label{subdivision}
{
\begin{alginc}
\State \bf Input: \rm An $n$-dimensional $\Gamma$-equivariant CW-complex $X$ with finite cell stabilizers and a metric as in Remark~\ref{rem:metric}. 
\State \bf Output: \rm A rigidification of $X$ (that is, an equivalent $\Gamma$-cell complex on which each stabilizer fixes its cell pointwise). 
\State 

\For{$m$ running from $1$ to $n$}
  \For{$\sigma$ running through lifts of $m$-cells in $_\Gamma \backslash X^{(m)}$}
	  \If{$\sigma$ is not rigid}
		  \State Use Algorithm~\ref{RFS} or \ref{VSS} to subdivide $\sigma$ into 
		   a partition $P$, which is a union of rigid $m$-cells,
		   \State disjoint up to boundaries, with a fundamental domain $F$ for the $\Gamma_\sigma$-action on $P$.
		  \State Run through all the $(m+1)$-cells; 
		  if their boundaries contain $\sigma$, 
		  \State \qquad \qquad \qquad \qquad \qquad \qquad \qquad \quad 
		  then replace $\sigma$ by its partition $P$.
		  \State Replace the cell $\sigma$ by $F$ in $_\Gamma \backslash X^{(m)}$.  
	  \EndIf
  \EndFor	
\EndFor
\State

\end{alginc}
}
\end{algorithm}

\begin{definition}
Following the notation in \cite{BuiEllis}, we use the term $\Gamma$-equivariant CW-complex, or simply $\Gamma$-cell complex, to mean a CW-complex $X$ on which a discrete group $\Gamma$ 
acts cellularly, i.e., in such a way that the action induces a permutation of the cells of $X$. 
We say the cell complex is \textit{rigid} if each element in the stabilizer of any cell fixes the cell pointwise.
\end{definition}

\begin{remark}
\label{rem:metric}
The algorithm producing the subdivision of the $\Gamma$-equivariant CW-complex $X$ only modifies combinatorial data, based on the barycentric subdivison of individual cells. We require $X$ to come with a geometric realization, equipped with a metric such that each of the cells of $X$ is convex, 
the restriction of the metric to each cell is CAT(0) and the cell stabilizers act by CAT(0)-isometries. We are not requiring that the metric is CAT(0) on the whole CW-complex. 
Note however that the examples we are most interested in, are those where $\Gamma$ is an arithmetic group and the geometric realization of $X$ is the associated symmetric space.
\end{remark}

\begin{definition}
A \emph{rigidification} $\hat{X}$ of a $\Gamma$-cell complex $X$ is a rigid $\Gamma$-cell complex $\hat{X}$ with the same underlying topological space as $X$.
The map passing through the underlying topological space is then a $\Gamma$-equivariant homeomorphism 
$\hat{X} \to X $ of $\Gamma$-spaces. Note that a $\Gamma$-equivariant homeomorphism of $\Gamma$-spaces does not need to preserve existing cell structure,
so $\hat{X}$ is allowed to have more cells than $X$.
\end{definition}

The outer shell of the rigid facets subdivision is Algorithm \ref{subdivision}, 
which subdivides (whenever necessary) representatives of cell orbits using Algorithm~\ref{RFS}, 
respectively Algorithm~\ref{VSS}.

\begin{proposition} \label{one}
Let $\Gamma$ be a discrete group, and let $X$ be a $\Gamma$-equivariant CW-complex having finitely many $\Gamma$-orbits and finite cell stabilizers. 
Assume furthermore that $X$ is equipped with a metric as in Remark~\ref{rem:metric}. 
Then Algorithm~\ref{subdivision} finds a rigidification of $X$ (with respect to the $\Gamma$-action).  It terminates in finite time. 
\end{proposition}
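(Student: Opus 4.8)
The plan is to establish two things in turn: that the $\Gamma$-cell complex $\hat X$ returned by Algorithm~\ref{subdivision} is a rigidification of $X$ in the sense of the definitions above, and that the algorithm halts. Both statements reduce, by induction on the skeletal dimension, to the behaviour of the single-cell subdivision routines of Algorithm~\ref{RFS} and Algorithm~\ref{VSS}, whose output is controlled by Lemma~\ref{Rigid Facets Lemma}.

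For correctness I would run the outer \textbf{for}-loop with the following invariant: after the pass for a given value of $m$, the current complex $X_m$ is a $\Gamma$-equivariant CW-complex with finitely many $\Gamma$-orbits of cells, with underlying space $|X|$, obtained from $X$ by refinements that alter only the interiors of cells and leave the boundaries of already-processed cells untouched, and such that every cell of $X_m$ of dimension $\le m$ has $\Gamma$-stabilizer fixing it pointwise. The invariant holds vacuously for $m=0$ before the loop, since a group fixing a $0$-cell fixes it pointwise. For the inductive step, let $\sigma$ be a non-rigid $m$-cell, with finite stabilizer $\Gamma_\sigma$. By the inductive hypothesis $\partial\sigma$ is already a rigid subcomplex; since $\Gamma_\sigma$ is finite and acts by CAT(0)-isometries on the CAT(0) cell $\sigma$ (Remark~\ref{rem:metric}), it has a fixed barycenter, and Lemma~\ref{Rigid Facets Lemma} then guarantees that Algorithm~\ref{RFS} (or \ref{VSS}) produces a $\Gamma_\sigma$-invariant partition $P$ of $\sigma$ into rigid $m$-cells, disjoint up to boundaries and with all their faces rigid as well, together with a fundamental domain $F$ for the $\Gamma_\sigma$-action on $P$. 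Because $P$ is $\Gamma_\sigma$-invariant, translating it by coset representatives of $\Gamma_\sigma$ in $\Gamma$ subdivides the whole orbit $\Gamma\sigma$ in a well-defined, $\Gamma$-equivariant manner; because this refinement touches only the interior of $\sigma$ and its translates, it is automatically compatible with every other cell and with the already-rigidified lower skeleton, and substituting $P$ for $\sigma$ in the boundaries of the $(m+1)$-cells containing it keeps the result a CW-complex with underlying space $|X|$. Iterating over the finitely many orbit representatives of $m$-cells re-establishes the invariant for $m$; after the pass $m=n$ all cells are rigid, so $\hat X$ is a rigidification of $X$.

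For termination, every loop occurring in Algorithm~\ref{subdivision} is finite: the outer loop runs over $m=1,\dots,n$ with $n<\infty$; for each $m$ the inner loop runs over the finitely many $\Gamma$-orbit representatives of $m$-cells, and finiteness of the orbit set is preserved because each invocation of Algorithm~\ref{RFS} or \ref{VSS} replaces a single orbit by finitely many; the boundary-update step inside that loop runs over the finitely many orbit representatives of $(m+1)$-cells; and each call to Algorithm~\ref{RFS} or \ref{VSS} terminates in finite time. Hence the algorithm performs finitely many finite steps.

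The step I expect to be the real obstacle is the rigidity clause of the loop invariant: one must know that subdividing a single $m$-cell renders that cell \emph{and every new face it creates} rigid, while not undoing the rigidification carried out in lower dimensions. This is precisely why Algorithm~\ref{subdivision} sweeps dimensions in increasing order --- so that Lemma~\ref{Rigid Facets Lemma} applies with its hypothesis ``$\partial\sigma$ already rigid'' satisfied --- and why one must verify that the local subdivision is $\Gamma_\sigma$-invariant, which is what makes the propagation over the orbit equivariant and the quotient bookkeeping through $F$ consistent. Granting Lemma~\ref{Rigid Facets Lemma}, the remainder is the routine check that a refinement supported in cell interiors preserves the CW-structure, the cellular $\Gamma$-action, the underlying space, and the finiteness of the orbit set.
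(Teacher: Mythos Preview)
Your proof is correct and follows the same approach as the paper: reduce to Lemma~\ref{Rigid Facets Lemma} for the key subdivision step, then handle correctness by induction on dimension and termination by finiteness of orbits and stabilizers. You have simply spelled out in detail the ``routine induction'' that the paper leaves implicit, including the loop invariant and the equivariant propagation over orbits; the paper also explicitly invokes Corollary~\ref{Algorithm 2 terminates} for finite termination of the inner subdivision calls, which you assert directly.
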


\begin{proof}
  The key step of the algorithm is proved by Lemma~\ref{Rigid Facets Lemma} below; the rest is a routine induction.  
  Lemma~\ref{Rigid Facets Lemma} is the point where the convexity and isometry requirements are needed. 
  By the finiteness assumptions for orbits and cell stabilizers, 
  the loops are all deterministic over finite index sets. 
  Each operation inside them takes finite time, cf. Corollary~\ref{Algorithm 2 terminates}, whence the claim.
\end{proof}

\begin{observation}
The outer shell Algorithm~\ref{subdivision} can be used with any subdivision algorithm for the cells. 
In particular, replacing the use of Algorithm~\ref{RFS} 
by the barycentric subdivision in Algorithm~\ref{subdivision},
the claims of Proposition~\ref{one} still hold. 
However, as mentioned in the introduction, the reason for developing Algorithm~\ref{RFS} 
is to reduce the blow-up in the number of cells, so as to make the algorithm practically applicable to cell complexes for higher-rank arithmetic groups. 
\end{observation}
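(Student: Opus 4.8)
The plan is to prove the Observation by isolating the precise ``interface'' through which Algorithm~\ref{subdivision} and its correctness proof (Proposition~\ref{one}) interact with Algorithm~\ref{RFS}, and then checking that barycentric subdivision of a cell meets that interface. Concretely, the key step inside Algorithm~\ref{subdivision} only asks the cell-subdivision subroutine for the following: given a non-rigid $m$-cell $\sigma$ whose boundary $\partial\sigma$ has already been refined into rigid cells (which is guaranteed because the outer loop has processed all dimensions $< m$), and given the finite stabilizer $\Gamma_\sigma$ acting by CAT(0)-isometries on the convex cell $\sigma$ as in Remark~\ref{rem:metric}, produce a $\Gamma_\sigma$-equivariant subdivision $P$ of $\sigma$ that refines the given subdivision of $\partial\sigma$, consists of rigid $m$-cells that are pairwise disjoint up to boundaries, and carries a fundamental domain $F$ for the $\Gamma_\sigma$-action --- all in finite time. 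Lemma~\ref{Rigid Facets Lemma} is exactly the assertion that Algorithm~\ref{RFS} satisfies this interface, and the proof of Proposition~\ref{one} uses nothing else about Algorithm~\ref{RFS}: the remainder is the deterministic finite induction over cell orbits and dimensions, together with Corollary~\ref{Algorithm 2 terminates} for the running time. Thus it suffices to re-establish the interface with barycentric subdivision in place of Algorithm~\ref{RFS}.

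Second, I would realize the ``barycentric subdivision of the cell $\sigma$'' as the geodesic cone of the already-refined boundary $\partial\sigma$ from an interior centre point $c_\sigma$, and verify the interface item by item. The centre $c_\sigma$ is chosen $\Gamma_\sigma$-equivariantly and in the interior of $\sigma$: since $\Gamma_\sigma$ is finite and acts by isometries on the CAT(0) cell $\sigma$, it has a fixed point, and averaging (using convexity of $\sigma$) the $\Gamma_\sigma$-orbit of an interior point yields a fixed point in the interior --- this is the step where the hypotheses of Remark~\ref{rem:metric} are genuinely used, precisely as in Lemma~\ref{Rigid Facets Lemma}. Coning $\partial\sigma$ from $c_\sigma$ then automatically refines $\partial\sigma$ and is $\Gamma_\sigma$-equivariant because $c_\sigma$ is fixed; every new cell has the form $c_\sigma * G$ for a cell $G$ of the subdivision of $\partial\sigma$ (or is $G$ itself, or the point $c_\sigma$), so its $\Gamma_\sigma$-stabilizer equals $\operatorname{Stab}_{\Gamma_\sigma}(G)$, which fixes $G$ pointwise by rigidity of $\partial\sigma$ and fixes $c_\sigma$, hence fixes $c_\sigma * G$ pointwise because in a CAT(0) cell the geodesic join of two pointwise-fixed convex pieces is pointwise fixed. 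A fundamental domain $F$ is obtained by letting $\Gamma_\sigma$ act on the finite set of new cells, choosing orbit representatives, and recording boundary identifications exactly as in Lemma~\ref{Rigid Facets Lemma}; finiteness of the whole construction is clear because the face poset of the polytope $\sigma$ is finite.

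Finally, with the interface re-established, I would observe that the proof of Proposition~\ref{one} goes through verbatim with barycentric subdivision substituted for Algorithm~\ref{RFS}: the output is a rigid $\Gamma$-CW complex on the same underlying space as $X$ --- compatibility across dimensions being automatic, since coning always leaves the already-chosen boundary subdivision in place --- with finitely many $\Gamma$-orbits (each orbit of cells of $X$ produces only finitely many new orbits, bounded in terms of $|\Gamma_\sigma|$ and $\dim\sigma$), finite stabilizers (subgroups of the original ones), and termination in finite time. The same reasoning applies to any cell-subdivision subroutine meeting the interface above, which also yields the first sentence of the Observation. I expect the only delicate point to be the same one that makes Lemma~\ref{Rigid Facets Lemma} nontrivial: the equivariant choice of interior centre together with the verification that coning a \emph{rigid} boundary subdivision is rigid in every dimension, not merely in the top dimension. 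Everything else is bookkeeping inherited from Proposition~\ref{one}.
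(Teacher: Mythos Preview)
The paper gives no proof for this Observation; it is stated as a remark immediately after Proposition~\ref{one} and left to the reader. Your argument is correct and supplies exactly the verification the paper omits: you isolate the interface that Algorithm~\ref{subdivision} requires of its cell-subdivision subroutine, and check that coning the already-rigidified boundary from a $\Gamma_\sigma$-fixed interior point meets it. The key technical point you flag --- the existence of a fixed interior centre and the rigidity of the coned cells --- is precisely the content of Lemma~\ref{Rigid Facets Lemma} specialized to the barycentric case, so your proof is aligned with the paper's own toolkit even though the paper does not spell it out here.

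One small terminological caution: what you call ``barycentric subdivision of $\sigma$'' is really the cone over the current (already refined) boundary, not the classical barycentric subdivision of the original polytope $\sigma$. This is the correct interpretation in the context of Algorithm~\ref{subdivision}, since lower-dimensional cells have already been processed, but it is worth saying explicitly that iterating this cone step over increasing dimensions is what reproduces the usual barycentric subdivision globally.
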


We now discuss the actual subdivision to rigidify cells, Algorithm~\ref{RFS}.

\begin{algorithm} [H]
\caption{--- Rigid Facets Subdivision}
\label{RFS}
{
\begin{alginc}

\State \bf Input: \rm An $m$-cell $\sigma$ with stabilizer group $\Gamma_\sigma$, with rigid faces, equipped with a metric as in Remark~\ref{rem:metric}.
\State \bf Output: \rm A $\Gamma_\sigma$-equivariant set of rigid $m$-cells, disjoint up to boundaries,
constituting a partition $P$ of $\sigma$, together with a contractible fundamental domain $F$ (a single $m$-cell) for the action of $\Gamma_\sigma$ on $P$. 
\State 
\begin{itemize}
 \item Sort the $(m-1)$-faces of $\sigma$ into orbits $\{\{g_{jt}\sigma_j\}_t\}_j$ under the action of $\Gamma_\sigma$,
where $j$ is indexing the orbits and $t$ is indexing the cells inside each orbit. 
\item Let $T$ be the list containing the element $g_{11}\sigma_1$.
\end{itemize} 
\While{$\#T < \#\{$orbits of $(m-1)$-faces of $\sigma\}$} 
 \State Choose one cell  $\tau \in \{g_{jt}\sigma_j\}$ in the next orbit $\{g_{jt}\sigma_j\}$ not yet represented in $T$, 
 \If{the union of $\tau$ with the cells in $T$ has vanishing naive Euler characteristic}
    \If{ $\{\tau\} \cup T$ is contractible}
      \State  Construct the boundary $S$ of $\{\tau\} \cup T$.
	\If{$S$ has the naive Euler characteristic of an $(m-2)$-sphere}	
	  \If{ $\#T < \#\{$orbits of $(m-1)$-faces of $\sigma\}$-1} 
	    \State  Add the chosen cell $\tau$ to $T$.
	    \Else
 	      \If{$S$ is simply connected}
	        Add the chosen cell $\tau$ to $T$.
	      \EndIf{}	
	   \EndIf   
	\EndIf
    \EndIf
  \EndIf
\EndWhile
\begin{itemize}
\item Use Algorithm~\ref{envelope} to construct the $m$-cell $F := |\bigcup_{\tau \in T}$ convex envelope($\tau$, barycenter($\sigma$))|. 
\item Let $\Gamma_\sigma^{pw}$ be the subgroup of $\Gamma_\sigma$ which fixes the cell $\sigma$ pointwise.
\item Then $P:=\bigcup_{1 \le t \le |\Gamma_\sigma/\Gamma_\sigma^{pw}|} g_{1t}F$ is the desired partition of $\sigma$.
\item Return $P$ and $F$.
\end{itemize}
\end{alginc}
}
\end{algorithm}

\begin{remark}
A) Essentially,  Algorithm~\ref{RFS} produces a convex union of cells of the barycentric subdivision which is a fundamental domain for the $\Gamma_\sigma$-action. 
The slight complications arise from the fact that we do not actually want to compute the full barycentric subdivision, to gain computational feasibility.
\\
B) The contractibility check on $\{\tau \} \cup T$ in Algorithm~\ref{RFS} can be implemented with Ellis' method~\cite{Hegarty}.
\end{remark}

\begin{lemma}[Rigid Facets Lemma] 
\label{Rigid Facets Lemma}
Let $\sigma$ be a cell (with stabilizer $\Gamma_\sigma$), 
whose faces are all rigid and which is equipped with a metric as in Remark~\ref{rem:metric}. 
Let $\Gamma_\sigma^{pw}$ be the subgroup of $\Gamma_\sigma$ which fixes the cell $\sigma$ pointwise.
Then there is a fundamental domain $F$ for the action of $\Gamma_\sigma/\Gamma_\sigma^{pw}$ on $\sigma$ such that $\sigma$ is tessellated by $|\Gamma_\sigma/\Gamma_\sigma^{pw}|$ copies of $F$.
\end{lemma}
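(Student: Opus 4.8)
The plan is to use the barycentric subdivision of $\sigma$ as a stepping stone, and to exploit the CAT(0) geometry to build a convex fundamental domain by hand. First I would recall that the barycentric subdivision $\operatorname{sd}(\sigma)$ has top-cells indexed by maximal flags $\widehat{\sigma}_0 \subsetneq \widehat{\sigma}_1 \subsetneq \cdots \subsetneq \widehat{\sigma}_{m-1} \subsetneq \sigma$ of faces of $\sigma$; here a flag is represented by the simplex with vertices the barycenters of the $\widehat{\sigma}_i$ together with the barycenter of $\sigma$ itself. Since $\Gamma_\sigma$ acts cellularly on $\sigma$ and hence permutes the faces, it acts on the set of such flags, and thus on the top-cells of $\operatorname{sd}(\sigma)$. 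The subgroup $\Gamma_\sigma^{pw}$ fixes every face of $\sigma$ (it even fixes $\sigma$ pointwise), so it acts trivially on the set of flags; therefore the action on top-cells of $\operatorname{sd}(\sigma)$ factors through the quotient $Q := \Gamma_\sigma/\Gamma_\sigma^{pw}$.

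Next I would argue that $Q$ acts \emph{freely} on the set of top-cells of $\operatorname{sd}(\sigma)$. Suppose $g \in \Gamma_\sigma$ fixes a maximal flag. Then $g$ fixes each face $\widehat{\sigma}_i$ in the flag (as a subspace), and in particular it fixes the top face $\widehat{\sigma}_{m-1}$, a facet of $\sigma$. By hypothesis all faces of $\sigma$ are rigid, so $g$ fixes $\widehat{\sigma}_{m-1}$ \emph{pointwise}. Now I use the metric hypotheses from Remark~\ref{rem:metric}: $g$ acts on $\sigma$ as a CAT(0)-isometry, $\sigma$ is convex, and $g$ fixes pointwise a codimension-one subset $\widehat{\sigma}_{m-1}$ which spans $\sigma$ (its affine hull is a hyperplane, and $\sigma$ is on one side); an isometry of a convex CAT(0) set fixing such a ``wall'' pointwise must be the identity on $\sigma$ (in a CAT(0) space the fixed-point set of an isometry is convex, and here it contains a hyperplane's worth of points inside a convex body, forcing it to be everything). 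Hence $g \in \Gamma_\sigma^{pw}$, i.e. the stabilizer in $Q$ of any top-cell of $\operatorname{sd}(\sigma)$ is trivial, so $Q$ permutes the top-cells of $\operatorname{sd}(\sigma)$ freely, and $|Q|$ divides the number of top-cells of $\operatorname{sd}(\sigma)$.

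Then I would pick one representative top-simplex from each $Q$-orbit and take $F$ to be the union of the $Q$-orbit representatives chosen so that they assemble into a \emph{connected, convex} region containing the barycenter of $\sigma$; this is exactly what Algorithm~\ref{RFS} does by greedily adding facet-orbit representatives subject to contractibility and sphere-boundary conditions, and by then coning off with the barycenter of $\sigma$ via Algorithm~\ref{envelope}. Concretely, $F = |\bigcup_{\tau \in T} \operatorname{convex\ envelope}(\tau, \operatorname{barycenter}(\sigma))|$ meets each $\Gamma_\sigma^{pw}$-translate class of top-simplices of $\operatorname{sd}(\sigma)$ exactly once: it contains one facet from each $\Gamma_\sigma$-orbit of facets of $\sigma$, hence (coning) one top-simplex of $\operatorname{sd}(\sigma)$ from each $Q$-orbit. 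The $|Q|$ translates $\{gF : g \in Q\}$ therefore cover $\sigma$, with pairwise intersections contained in boundaries, so $\sigma$ is tessellated by $|Q| = |\Gamma_\sigma/\Gamma_\sigma^{pw}|$ copies of $F$, as claimed.

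The main obstacle is the geometric freeness step: showing that an element of $\Gamma_\sigma$ fixing a facet of $\sigma$ pointwise must fix all of $\sigma$ pointwise. This is where all the hypotheses of Remark~\ref{rem:metric} (convexity of cells, CAT(0) metric on each cell, action by CAT(0)-isometries) are genuinely used, and it is the claim the proposition's proof singles out. The remaining ingredients — that $\Gamma_\sigma^{pw}$ acts trivially on flags, that coning with the barycenter of $\sigma$ produces a convex connected fundamental region out of the chosen facet representatives, and that the combinatorial Euler-characteristic and simple-connectivity checks in Algorithm~\ref{RFS} correctly detect when the partial union $T$ closes up to a disc — are then essentially bookkeeping on the barycentric subdivision, which can be deferred to Corollary~\ref{Algorithm 2 terminates} and the discussion of Algorithm~\ref{envelope}.
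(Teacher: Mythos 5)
Your proposal is correct and follows essentially the same route as the paper: the heart of both arguments is that an element of $\Gamma_\sigma$ stabilizing a facet must fix that facet pointwise (rigidity of faces) and hence, by convexity and the CAT(0)-isometry hypothesis together with the fixed barycenter, lie in $\Gamma_\sigma^{pw}$, so that $\Gamma_\sigma/\Gamma_\sigma^{pw}$ acts freely on the facets. Your phrasing via flags and the barycentric subdivision is only a mild repackaging of the paper's direct orbit count on facets, and the final fundamental domain --- a union of convex envelopes of facet-orbit representatives with the barycenter of $\sigma$ --- is identical.
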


\begin{proof}
  First we have to check that the statement is well defined in the sense that $\Gamma_\sigma/\Gamma_\sigma^{pw}$ is a group. This is the case because for all $g$ in $\Gamma_\sigma$, for all $\gamma$ in $\Gamma_\sigma^{pw}$, for all $x$ in $\sigma$ we have $(g^{-1} \gamma g)x = g^{-1} (\gamma (gx)) =  g^{-1} (gx) = x.$ Therefore, as the kernel of the action of $\Gamma_\sigma$ on $\sigma$, $\Gamma_\sigma^{pw}$ is a normal subgroup; and there is a short exact sequence  of groups,
$$
1 \to \Gamma_\sigma^{pw} \to \Gamma_\sigma \to \Gamma_\sigma/\Gamma_\sigma^{pw} \to 1,
$$
which makes our statement well defined.

Suppose that $\alpha$ is one of the facets of $\sigma$. 
We are going to prove that the size of the orbit of $\alpha$, 
under the action of $\Gamma_\sigma$ on the set of facets of $\sigma$, is $|\Gamma_\sigma/\Gamma_\sigma^{pw}|$. 
Let $\Gamma_\alpha$ be the stabilizer of $\alpha$. 
We claim that $\Gamma_\alpha\cap \Gamma_\sigma=\Gamma_\sigma^{pw}$. 
The action of $\Gamma_\sigma$ on the compact set $closure(\sigma)$ is by homeomorphisms; 
therefore, any element of $\Gamma_\sigma$ fixing $\sigma$ pointwise also fixes the boundary $\partial \sigma$ pointwise. 
Hence $\Gamma_\alpha\cap \Gamma_\sigma\supset\Gamma_\sigma^{pw}$. 
On the other hand, let $g\in \Gamma_\alpha\cap \Gamma_\sigma$. 
Then by assumption on the rigidity of the facets, $g$ fixes the cell $\alpha$ pointwise. 
Since the cell $\sigma$ is convex and the group acts by CAT(0)-isometries, 
the barycenter of~$\sigma$ preserves its distances to the boundary $\partial \sigma$
under the action of $\Gamma_\sigma$ on $\sigma$, and hence remains fixed. 
As a further consequence of the CAT(0) isometry, 
the fixed point set of $g$ extends, by preservation of the distances, 
from the convex envelope of $\alpha$ and the barycenter of $\sigma$ to the whole cell $\sigma$.
Hence, $g$ is an element of $\Gamma_\sigma^{pw}$. Thus, we can conclude that $\Gamma_\alpha\cap\Gamma_\sigma=\Gamma_\sigma^{pw}$. Whence, the size of the orbit of $\alpha$ under the action of $\Gamma_\sigma$ is $|\Gamma_\sigma/\Gamma_\sigma^{pw}|$.

Furthermore, from $\Gamma_\alpha\cap\Gamma_\sigma=\Gamma_\sigma^{pw}$, we see that $\Gamma_\sigma/\Gamma_\sigma^{pw}$ acts freely on the set of facets of $\sigma$. So, we can take one arbitrary representative $\alpha_k$ for each orbit of facets, to unite to a fundamental domain for $\Gamma_\sigma/\Gamma_\sigma^{pw}$ on the set of facets of $\sigma$. Taking the convex envelope $e_k$ of $\alpha_k$ and the barycenter of $\sigma$, we get a fundamental domain $F := \bigcup_k e_k$ for $\Gamma_\sigma/\Gamma_\sigma^{pw}$ on $\sigma$. By the above orbit size calculation, it yields the desired tessellation.
\end{proof}

\begin{corollary} \label{Algorithm 2 terminates}
Algorithm~\ref{RFS} terminates after finitely many steps and produces a rigid subdivision of the cell $\sigma$,
 if the boundary of $\sigma$ admits a contractible fundamental domain $T$ with simply connected boundary.
\end{corollary}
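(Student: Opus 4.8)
The plan is to deduce the corollary from the Rigid Facets Lemma (Lemma~\ref{Rigid Facets Lemma}), after first settling termination by a finiteness count.

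\emph{Termination.} First I would record that every index set occurring in Algorithm~\ref{RFS} is finite: $\sigma$ has finitely many $(m-1)$-faces and $\Gamma_\sigma$ is finite, so there are finitely many orbits of facets and finitely many cells in each. The while loop raises $\#T$ by at most one per successful pass and halts once $\#T$ reaches the number of orbits of facets; hence it makes only finitely many successful passes, and in between it tests only finitely many candidate cells $\tau$, each test (vanishing naive Euler characteristic of $\{\tau\}\cup T$, contractibility of $\{\tau\}\cup T$ by Ellis' method \cite{Hegarty}, the naive Euler characteristic of the boundary $S$, and, on the last pass, simple connectivity of $S$) being a finite computation. The hypothesis that $\partial\sigma$ admits a contractible fundamental domain with simply connected boundary is precisely what ensures that the search does not exhaust its admissible choices before every orbit is represented, so the loop exits with such a list $T$. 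Building $F$ from $T$ via Algorithm~\ref{envelope} and forming the $|\Gamma_\sigma/\Gamma_\sigma^{pw}|$ translates $g_{1t}F$ are again finite operations, so Algorithm~\ref{RFS} terminates.

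\emph{Correctness of the output.} Next I would feed $T$ into Lemma~\ref{Rigid Facets Lemma}, which gives that $\Gamma_\sigma^{pw}$ is normal in $\Gamma_\sigma$, that $\Gamma_\sigma/\Gamma_\sigma^{pw}$ acts freely on the facets of $\sigma$ with every orbit of size $|\Gamma_\sigma/\Gamma_\sigma^{pw}|$, and that coning one facet $\alpha_k$ per orbit to the barycenter $b$ of $\sigma$ produces a fundamental domain $F=\bigcup_k e_k$ with $e_k=\mathrm{conv}(\alpha_k,b)$, which tessellates $\sigma$ into $|\Gamma_\sigma/\Gamma_\sigma^{pw}|$ translates. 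Since the list $T$ contains exactly one facet per orbit, the cell $F=|\bigcup_{\tau\in T}\mathrm{conv}(\tau,b)|$ that the algorithm builds is such a fundamental domain; it is contractible, being a cone with apex $b$; and $P=\bigcup_{1\le t\le|\Gamma_\sigma/\Gamma_\sigma^{pw}|}g_{1t}F$ is a tessellation of $\sigma$ whose pieces are disjoint up to boundaries --- exactly the output prescribed in the specification of Algorithm~\ref{RFS}.

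\emph{Rigidity.} It remains to see that the $m$-cells of $P$ are rigid. A typical one is a translate of $e_k=\mathrm{conv}(\alpha_k,b)$ with $\alpha_k$ a facet of $\sigma$, which is rigid by the standing hypothesis on the faces of $\sigma$; conjugating, it suffices to treat $e_k$ itself and an element $g$ stabilizing $e_k$. Such a $g$ lies in $\Gamma_\sigma$ (it carries the unique $m$-cell $\sigma$ of $X$ containing $e_k$ to another $m$-cell still containing $e_k$, so fixes $\sigma$); cellularly $g$ fixes the cone point $b$ --- fixed by all of $\Gamma_\sigma$ anyway by the CAT(0) argument in the proof of Lemma~\ref{Rigid Facets Lemma} --- hence $g$ stabilizes the opposite facet $\alpha_k$ of $e_k$; rigidity of $\alpha_k$ makes $g$ fix $\alpha_k$ pointwise; and then, exactly as in that same proof, preservation of distances under the CAT(0)-isometry $g$ propagates the fixed-point set from $\mathrm{conv}(\alpha_k,b)=e_k$ to all of $e_k$. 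Applying the same argument to the lower-dimensional new cells $\mathrm{conv}(\tau',b)$ shows they too are rigid, while the faces of $\sigma$ were rigid by assumption; so the subdivision of $\sigma$ is rigid. I expect the genuinely delicate part to be not this rigidity step but the combinatorial-topological bookkeeping around the while loop: one must be sure that a contractible fundamental domain with simply connected, sphere-like boundary can really be assembled one $\Gamma_\sigma$-orbit at a time while keeping each partial union contractible and each partial boundary of the naive Euler characteristic of an $(m-2)$-sphere, and that these \emph{naive} invariants --- the only ones the algorithm computes --- suffice to certify that $T$ is an $(m-1)$-ball, so that it cones off to a genuine $m$-cell $F$ and an honest tessellation $P$; the rest is the routine induction already invoked in the proof of Proposition~\ref{one}.
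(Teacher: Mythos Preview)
Your proposal is correct and follows essentially the same approach as the paper, deducing the result from Lemma~\ref{Rigid Facets Lemma}. The paper's own proof is far terser: it simply cites the lemma for the existence of $F$ and then states that the contractibility of $T$ and simple connectedness of $S$ are what permit merging $\bigcup_{\tau\in T}\mathrm{conv}(\tau,\mathrm{barycenter}(\sigma))$ into a single cell with boundary $\bigcup_{\tau\in T}\tau\cup\bigcup_{s\in S}e(s)$ --- precisely the point you flag in your final paragraph as the delicate one, while your explicit termination and rigidity arguments go beyond what the paper records.
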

\begin{proof}
The existence of some fundamental domain $F$ for the action $\sigma$ 
is guaranteed by Lemma~\ref{Rigid Facets Lemma}.
The contractibility of the fundamental domain $T$
and the simply connectedness of its boundary $S$ ensure that merging the union of cells 
\begin{center}$\bigcup\limits_{\tau \in T}$ convex envelope($\tau$, barycenter($\sigma$))\end{center}
into one cell $F$ can be realized with the boundary construction 
$\partial F =  \bigcup_{\tau \in T} \tau \cup \bigcup_{s \in S} e(s),$
where 
$e(s)$ is the convex envelope of $s$ and the barycenter of $\sigma$.
\end{proof}

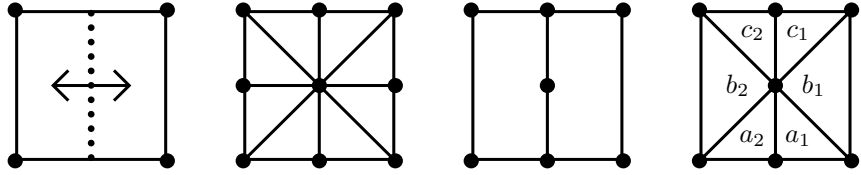
\begin{figure}
\caption{From left to right: 1) A full square with natural cell structure and action by its stabilizer mirroring it onto itself.
2) Barycentric subdivision of the square. 3)~Rigid facet subdivision of the square with respect to the mirroring action.
\mbox{4) Virtually Simplicial Subdivision} of the square with respect to the mirroring action; 
note that the fundamental domain $F$ can be chosen arbitrarily by selecting one cell among each of the pairs 
$\{ a_1, a_2\}$, $\{ b_1, b_2\}$ and $\{ c_1, c_2\}$.}
\label{comparison example}
\scalebox{1} 
{
\begin{pspicture}(-0.2,-0.2)(11.2,2.2)
\psframe[linewidth=0.04,dimen=outer](0,0)(2,2)
\psdots[dotsize=0.2](0,0)
\psdots[dotsize=0.2](0,2)
\psdots[dotsize=0.2](2,0)
\psdots[dotsize=0.2](2,2)
\psline[linewidth=0.09cm,linestyle=dotted](1,0)(1,2)
\psline[linewidth=0.04](0.7,0.8)(0.5,1)(0.7,1.2)
\psline[linewidth=0.04cm](0.5,1)(1.5,1)
\psline[linewidth=0.04](1.3,0.8)(1.5,1)(1.3,1.2)
\psframe[linewidth=0.04,dimen=outer](3,0)(5,2)
\psdots[dotsize=0.2](3,0)
\psdots[dotsize=0.2](5,2)
\psdots[dotsize=0.2](3,2)
\psdots[dotsize=0.2](5,0)
\psline[linewidth=0.04cm](4,0)(4,2)
\psdots[dotsize=0.2](4,0)
\psdots[dotsize=0.2](4,1)
\psdots[dotsize=0.2](4,2)
\psline[linewidth=0.04cm](3,1)(5,1)
\psdots[dotsize=0.2](3,1)
\psdots[dotsize=0.2](5,1)
\psline[linewidth=0.04cm](3,0)(5,2)
\psline[linewidth=0.04cm](5,0)(3,2)
\psframe[linewidth=0.04,dimen=outer](6,0)(8,2)
\psdots[dotsize=0.2](6,0)
\psdots[dotsize=0.2](8,2)
\psdots[dotsize=0.2](8,0)
\psdots[dotsize=0.2](6,2)
\psline[linewidth=0.04cm](7,0)(7,2)
\psdots[dotsize=0.2](7,0)
\psdots[dotsize=0.2](7,1)
\psdots[dotsize=0.2](7,2)
\psframe[linewidth=0.04,dimen=outer](9,0)(11,2)
\psdots[dotsize=0.2](9,0)
\psdots[dotsize=0.2](11,2)
\psdots[dotsize=0.2](9,2)
\psdots[dotsize=0.2](11,0)
\psline[linewidth=0.04cm](10,0)(10,2)
\psdots[dotsize=0.2](10,0)
\psdots[dotsize=0.2](10,1)
\psdots[dotsize=0.2](10,2)
\psline[linewidth=0.04cm](9,0)(11,2)
\psline[linewidth=0.04cm](11,0)(9,2)
\rput(9.7,1.7){$c_2$}
\rput(10.3,1.7){$c_1$}
\rput(9.5,1.0){$b_2$}
\rput(9.7,0.3){$a_2$}
\rput(10.5,1.0){$b_1$}
\rput(10.3,0.3){$a_1$}
\end{pspicture}
}
\end{figure}

\begin{algorithm} 
  \caption{Constructing the union of the convex envelopes}
  \label{envelope}
	{
	  \begin{alginc}
	    
	    \State \bf Input: \rm A list $T$ of $j$-cells, connected by adjacency,
	    \State \qquad \qquad		and an $m$-cell $\sigma$ such that all $\tau \in T$ are faces of $\sigma$.
	    \State \bf Output: \rm A $(j+1)$-cell $F$ which has the same underlying topological space as 
	     \begin{center}
		$\bigcup_{\tau \in T}$convex envelope($\tau$, barycenter($\sigma$)).
	     \end{center}
			\State
			\State Record the barycenter of $\sigma$ as a new vertex, with stabilizer $\Gamma_\sigma$.
			\State Enumerate the finite set $S := \{ \rho \medspace {\rm is} \medspace (j-1){\text{-cell}}  \medspace |  \medspace \exists !  \medspace \tau \in T : \rho \in \partial \tau\}$.
			\State \textit{Then $S$ contains all the $(j-1)$-faces $\rho$ of all $\tau \in T$ such that $\rho$ is not a common face of any two $j$-cells $\tau_1, \tau_2 \in T$. }
			\begin{itemize}

			\item For each $s \in S$, take the convex envelope $e(s)$ of $s$ and the barycenter of $\sigma$.
			\item Record $e(s)$ as an oriented $j$-cell, 
			 with boundary
			$$\partial e(s) = \{s\} \cup \bigcup_{\varepsilon \in \partial s} \{\text{convex envelope}(\varepsilon,{\rm barycenter}(\sigma))\}.$$
			For the stabilizers, we record $\Gamma_{e(s)} = \Gamma_s \cap \Gamma_\sigma$ and  
			$\Gamma_{\text{convex envelope}(\varepsilon,{\rm barycenter}(\sigma))} = \Gamma_\varepsilon \cap \Gamma_\sigma$.
			
			\medskip
			
			\textit{The boundary of the $(j+1)$-cell $F$ consists of all the $j$-faces $\tau \in T$ and $e(s)$ (for all $s \in S$).
			Here, we have to take care of which of the newly constructed cells $e(s)$ are on the same $\Gamma_\sigma$-orbit.
			In order to decide this, we make use of their common vertex, the barycenter of $\sigma$ : }
			\item Identify the orbits $e(s_1)$ and $e(s_2)$ if and only if $\exists \medspace \gamma \in \Gamma_\sigma : \gamma s_1 = s_2$.
			\item Attribute arbitrary orientations to $\Gamma_\sigma$-representatives of the new cells $e(s)$, 
			  and spread them on their $\Gamma_\sigma$-orbit using the above identifications.
			\item Return the $(j+1)$-cell $F$ with boundary
			$$\partial F = \bigcup_{\tau \in T} \tau \cup \bigcup_{s \in S} e(s) $$
			and stabilizer $\Gamma_\sigma \cap \bigcap_{\tau \in T} \Gamma_\tau$, where the orbits are subject to the above specified identifications.
			\end{itemize}
		\end{alginc}
	}
\end{algorithm}

\section{Virtually Simplicial Subdivision}\label{Virtually Simplicial Subdivision}
As a compromise between Rigid Facets Subdivision (RFS) and Barycentric Subdivision (BCS), we can make a simplification of RFS, 
which we shall call Virtually Simplicial Subdivision (VSS) and detail in Algorithm~\ref{VSS} below.
For a cell complex $X$, VSS$(X)$ is a refinement of RFS$(X)$, and BCS$(X)$ is a further refinement of VSS$(X)$,
as illustrated in Figure~\ref{comparison example}.
\begin{algorithm} [H]
\caption{--- Virtually Simplicial Subdivision}
\label{VSS}
{
\begin{alginc}

\State \bf Input: \rm An $m$-cell $\sigma$ with stabilizer group $\Gamma_\sigma$, with rigid faces, equipped with a metric as in Remark~\ref{rem:metric}.
\State \bf Output: \rm A $\Gamma_\sigma$-equivariant set of rigid $m$-cells, disjoint up to boundaries,
consituting a partition $P$ of $\sigma$, together with a fundamental domain $F$ (a set of $m$-cells) for the action of $\Gamma_\sigma$ on $P$. 
\State 
\begin{itemize}
 \item Sort the $(m-1)$-faces of $\sigma$ into orbits $\{\{g_{jt}\sigma_j\}_t\}_j$ under the action of $\Gamma_\sigma$,
where $j$ is indexing the orbits and $t$ is indexing the cells inside each orbit. 
\item Use Algorithm~\ref{envelope} to construct the $m$-cells $F_j := $ convex envelope($g_{j1}\sigma_j$, barycenter($\sigma$)), where $j$ runs through all orbits.
\item Let $\Gamma_\sigma^{pw}$ be the subgroup of $\Gamma_\sigma$ which fixes the cell $\sigma$ pointwise.
\item Then $P:=\bigcup_j \bigcup_{1 \le t \le |\Gamma_\sigma/\Gamma_\sigma^{pw}|} g_{jt}F_j$ is the desired partition of $\sigma$.
\item Return $P$ and the set $F := \{F_j\}_j$.
\end{itemize}
\end{alginc}
}
\end{algorithm}

\begin{corollary} \label{VSS terminates}
Algorithm~\ref{VSS} 
terminates after finitely many steps and produces a rigid subdivision of the cell $\sigma$.
\end{corollary}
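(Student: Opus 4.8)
The plan is to deduce both assertions from the Rigid Facets Lemma (Lemma~\ref{Rigid Facets Lemma}). The point is that Virtually Simplicial Subdivision performs exactly the same cell-by-cell construction as Rigid Facets Subdivision --- for each orbit $j$ of $(m-1)$-faces it forms, via Algorithm~\ref{envelope}, the convex envelope $F_j$ of a representative facet $g_{j1}\sigma_j$ with the barycenter of $\sigma$ --- but it \emph{omits} the final step in which Algorithm~\ref{RFS} merges these envelopes into a single fundamental-domain cell $F$. Consequently VSS needs none of the contractibility, naive-Euler-characteristic and simple-connectivity checks that Algorithm~\ref{RFS} performs precisely in order to carry out that merge; this is why the present corollary, unlike Corollary~\ref{Algorithm 2 terminates}, carries no hypothesis on the boundary of $\sigma$.

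Termination is then immediate from finiteness: $\sigma$ has finitely many $(m-1)$-faces, hence finitely many $\Gamma_\sigma$-orbits $j$; for each $j$ we call Algorithm~\ref{envelope} once, which itself loops only over the finite set $S$ of lower faces and over the $\Gamma_\sigma$-orbits of the new cells $e(s)$ (the same finiteness invoked for Corollary~\ref{Algorithm 2 terminates}); and assembling $P$ runs over coset representatives of the finite group $\Gamma_\sigma/\Gamma_\sigma^{pw}$. So Algorithm~\ref{VSS} halts after finitely many steps.

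For correctness I would argue as follows. By Lemma~\ref{Rigid Facets Lemma}, $\Gamma_\sigma^{pw}$ is normal in $\Gamma_\sigma$, the quotient $\Gamma_\sigma/\Gamma_\sigma^{pw}$ acts freely on the facets of $\sigma$, and the convex envelopes of facet representatives with the barycenter tessellate $\sigma$ by $|\Gamma_\sigma/\Gamma_\sigma^{pw}|$ translated copies, pairwise disjoint up to boundaries. These envelopes are exactly the $F_j$ that Algorithm~\ref{envelope} returns, now realized as honest $m$-cells with their boundaries and $\Gamma_\sigma$-orbit identifications recorded. Hence $P=\bigcup_j\bigcup_{1\le t\le|\Gamma_\sigma/\Gamma_\sigma^{pw}|} g_{jt}F_j$ is $\Gamma_\sigma$-equivariant, covers $\overline\sigma$, and has members meeting only along boundaries: it is a partition of $\sigma$ with fundamental domain $F=\{F_j\}_j$. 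For rigidity I would reuse the CAT(0) argument of Lemma~\ref{Rigid Facets Lemma} by downward induction on dimension: the faces of an $F_j$ are the facet $g_{j1}\sigma_j$ (rigid by hypothesis) and convex envelopes $e(s)$ of lower faces $s$ of $\sigma$ with the barycenter; an element of $\Gamma_s\cap\Gamma_\sigma$ fixes $s$ pointwise (rigidity of the faces of $\sigma$, resp.\ the induction hypothesis) and fixes the barycenter, since it acts by a CAT(0)-isometry on the convex cell $\sigma$ and so preserves distances to $\partial\sigma$, exactly as in the proof of the lemma; by convexity its fixed-point set then contains $e(s)$, and likewise $F_j$. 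Every cell of $P$ is a $\Gamma_\sigma$-translate of such a cell, hence rigid.

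The main obstacle I anticipate is not termination but verifying that these local pieces glue to a genuine CW subdivision of $\overline\sigma$: one must check that along an internal wall the cells $e(s)$ built for two neighbouring envelopes $F_{j_1},F_{j_2}$ are matched consistently --- this is the purpose of the orbit-identification step in Algorithm~\ref{envelope} through the shared barycenter vertex --- and that the total space is indeed $\overline\sigma$. I would dispose of this as in the barycentric picture that RFS approximates: $\bigcup_{j,t} g_{jt}F_j$ is a sub-union of the genuine barycentric subdivision of $\sigma$, hence a CW complex, and one transports the conclusion along the $\Gamma_\sigma$-equivariant homeomorphism onto $\overline\sigma$. The remaining verifications are the same routine finiteness bookkeeping already invoked for Corollary~\ref{Algorithm 2 terminates}.
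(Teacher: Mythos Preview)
Your proposal is correct and follows the same route as the paper: both deduce the corollary from the Rigid Facets Lemma, and both emphasize that the topological hypotheses of Corollary~\ref{Algorithm 2 terminates} are unnecessary precisely because VSS skips the merging step and keeps the envelopes $F_j$ as separate cells with the structure already supplied by Algorithm~\ref{envelope}. The paper's own proof is a two-sentence sketch (``the existence of $F$ is guaranteed by Lemma~\ref{Rigid Facets Lemma}; we do not need any topological properties for $F$, because its cells already have the structure constructed with Algorithm~\ref{envelope}''), so your explicit finiteness bookkeeping, the CAT(0) rigidity argument for the $F_j$ and their faces, and the gluing discussion via the barycentric picture are all elaborations the paper leaves implicit rather than a different approach.
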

\begin{proof}
The existence of the fundamental domain $F$ for the action $\sigma$ 
is guaranteed by Lemma~\ref{Rigid Facets Lemma}.
We do not need any topological properties for $F$, because its cells already have the structure constructed with Algorithm~\ref{envelope}.
\end{proof}

\section{Hybrid Subdivision} \label{hybrid subdivision}
The bulk of the processing time of Rigid Facets Subdivision (RFS) and Virtually Simplicial Subdivision (VSS) is being spent on the top-dimensional cells.
We can save a considerable amount of processing time as compared to RFS if we do not merge the top-dimensional cells. 
And we can do that by applying RFS until we are in (top minus 1) dimensions, and then achieve the subdivision using VSS on the top-dimensional cells.
We will call this process ``Hybrid Subdivision (HyS)''.
It is faster than both RFS and VSS on large cell complexes, and it produces considerably less cells than VSS.
We note however that the VSS process on the top-dimensional cells adds a few low-dimensional cells when connecting to the barycenter, 
so for instance in Table~\ref{Numbers of cells for Sp4Z}, the low-dimensional numbers of cells are slightly higher for HyS than for RFS, rather than being equal.

\section{Comparison of the various subdivision algorithms}
As illustrated in Figure \ref{comparison example}, 
all of the cells constructed by Rigid Facets Subdivision are also constructed by barycentric subdivision;
and usually, Rigid Facets Subdivision is coarser than barycentric subdivision. 
However, the worst-case complexity of Rigid Facets Subdivision is not better than that of barycentric subdivision: 
If $X$ is a single $n$-simplex with the natural permutation action of the symmetric group $\Sigma_{n+1}$ acting on the vertices, 
then any rigidification will need to produce at least $(n+1)!=\#\Sigma_{n+1}$ top-dimensional cells for $X$. 
However, the point is that the average cell complex for interesting arithmetic groups has many of its cells already almost rigid and, only very few with maximally possible stabilizer. 
Therefore, the complexity for the cases of interest is significantly better than that of the barycentric subdivision, as evidenced by Table \ref{Numbers of cells for PSL4Z}.
As we see from Figure~\ref{comparison example}, 
Rigid Facets Subdivision is a minimal rigidification only for the top-dimensional cells;
in lower dimensions it creates some superfluous cells at the barycenters of facets,
and cells connecting those barycenters with the constructed fundamental domain for the boundary of the subdivided cell. 

In Table \ref{Numbers of cells for PSL4Z}, note that we do not have the numbers of orbits for the barycentric subdivision, because the latter was not constructed;
only the numbers of cells have been calculated.
In such a construction, the boundaries of the $4$-cells would span a $1658304 \times 2138688$-matrix, and even though that matrix is quite sparse, 
the authors have not tried to store that amount of information on a machine. 
In retrospective however, considering that the amount of cells that have been constructed with Virtually Simplicial Subdivision
comes close to the amount of cells expected for barycentric subdivision, it turns out that this issue would not prevent applying barycentric subdivision
(which in the end was not done, because Table~\ref{runtime} was interpreted by the authors as an exponential growth of the runtime).

In contrast, for SL$_3(\Z)$, the barycentric subdivision is almost reached (see Table \ref{Numbers of cells for SL3Z}) :
In dimensions $0$ and $1$, the same cell structure is obtained with both subdivision methods,
the only difference being that the four tetrahedra which represent the  top-dimensional cells, are merged into one polyhedron when passing to the rigid facets subdivision,
along three triangles of trivial stabilizers.
Note also that Soul\'e did in~\cite{Soule} carry out the barycentric subdivision by hand.
This means for the $2$-torsion subcomplex of SL$_3(\Z)$, which we will discuss in Section~\ref{SL3Z at the prime 2},
that we can extract it equivalently by hand from~\cite{Soule}, or by machine after rigid facets subdivision.

In order to benchmark the run-time, in Table~\ref{runtime}, we denote the subgroup of $\op{SL}_3(\mathbb{Z})$ consisting of
\begin{enumerate}
 \item all matrices whose first column agrees with
the first standard basis vector modulo $2$ by $\Gamma_1$;
\item  all matrices which are upper triangular modulo $2$ by $\Gamma_2$.
\end{enumerate}

\begin{table}
\caption{Time spent on a single processor for subdividing available cell complexes.}
\label{runtime}
\begin{tabular}{|l|r|r|r|r|r|}
\hline &&&&&\\
Arithmetic group	&$\op{SL}_3(\mathbb{Z})$ &   $\Gamma_1$ in $\op{SL}_3(\mathbb{Z})$& $\Gamma_2$ in $\op{SL}_3(\mathbb{Z})$ & Sp$_4(\Z)$ & $\op{PSL}_4(\mathbb{Z})$ \\
\hline &&&&&\\
Rigid Facets Subdivision         &8s            &  17s          &	 26s  & 	  24s&	69 minutes\\
Virtually Simplicial Subdivision &9s            &  22s 		&	 33s  &		  43s&  111 minutes\\
Hybrid Subdivision		 &9s		&  20s		&	 31s  & 	  33s&  33 minutes \\
Barycentric Subdivision          &13s           &  56s		&	104s  &          296s&	? \\
\hline
\end{tabular}
\end{table}

\section{Example: Farrell--Tate cohomology of \texorpdfstring{$\op{SL}_3(\mathbb{Z})$}{SL3Z at the prime 2}} 
\label{SL3Z at the prime 2}
 \begin{table}
 \caption{Numbers of cells in the individual dimensions, of the studied non-rigid fundamental domain $X$ for SL$_3(\Z)$, 
 its rigid facets subdivsion RFS$(X)$, its virtually simplicial subdivision VSS$(X)$, its hybrid subdivision HyS$(X)$ and its barycentric subdivision BCS$(X)$ 
 (the latter three coincide for SL$_3(\Z)$).}
 \label{Numbers of cells for SL3Z}
\begin{tabular}{|l|r|r|r|r|}
\hline &&&&\\
Dimension            			&   0 &  1   &     2 &    3   \\
\hline &&&&\\
\# Orbits in $X$ 			  & 1  & 1 &  2 &  1\\
\# Cells in $X$     			 &  16 & 24 & 10 & 1\\
\# Orbits in  RFS$(X)$	  		  & 5 &  11 &  8 &  1\\
\# Cells in  RFS$(X)$  			 & 51 & 194 & 168  & 24\\ 
\# Orbits in VSS$(X)$, HyS$(X)$ and BCS$(X)$			&   5 &  11 &  11 &  4 \\ 
\# Cells in VSS$(X)$, HyS$(X)$ and BCS$(X)$ 			 &  51 & 194 & 240 & 96\\
\hline
\end{tabular}
\end{table}
 
From the non-rigid cell complex describing the action of SL$_3(\mathbb{Z})$ on its symmetric space, provided by Mathieu Dutour Sikiri\'c in \cite{HAP},
we obtain, after applying Rigid Facets Subdivision, the following $2$-torsion subcomplex, in accordance with Soul\'e's subdivision~\cite{Soule}.
\begin{center} 
\scalebox{0.6} 
{
\begin{pspicture}(-1.3,-7.44125)(11.894688,7.46125)
\pstriangle[linewidth=0.04,dimen=outer](5.8803124,-6.42125)(10.46,7.72)
\psline[linewidth=0.04](11.050312,-6.36125)(5.9103127,-3.52125)(5.8903127,1.25875)(0.6903125,1.27875)(0.6903125,-6.40125)(5.9303126,-3.52125)(5.9503126,-3.54125)
\psline[linewidth=0.04](11.090313,-6.42125)(11.110312,1.27875)(5.8903127,1.25875)(5.9103127,5.65875)(0.6703125,1.27875)(0.6903125,1.25875)
\psline[linewidth=0.04](5.9303126,5.65875)(11.110312,1.27875)(11.130313,1.25875)
\usefont{T1}{ptm}{m}{it}
\rput(5.9759374,6.26875){stab(M) $\cong \Sfour$}
\usefont{T1}{ptm}{m}{it}
\rput(-0.575,1.44875){stab(Q) $\cong \Dsix$}
\usefont{T1}{ptm}{m}{it}
\rput(7.217656,1.60875){stab(O) $\cong \Sfour$}
\usefont{T1}{ptm}{m}{it}
\uput[0](11.0,1.62875){stab(N) $\cong \Dfour$}
\usefont{T1}{ptm}{m}{it}
\uput[0](6.0,-3.4){stab(P) $\cong \Sfour$}
\usefont{T1}{ptm}{m}{it}
\rput(0.14765625,-6.21125){N'}
\usefont{T1}{ptm}{m}{it}
\rput(11.657657,-6.19125){M'}
\uput[90](8.5,3.5){$\Dtwo$} 
\uput[0](5.9,3.5){$\Dthree$} 
\uput[0](5.9,-2.0){$\Dthree$} 
\uput[0](5.4,-5.0){$\Dtwo$} 
\uput[180](3.3,3.5){$\Z/2$} 
\rput(0.14765625,-2.21125){$\Z/2$} 
\uput[0](2.6,-2.0){$\Z/2$} 
\uput[0](2.6,-4.7){$\Dfour$} 
\uput[0](8.2,-2.0){$\Z/2$} 
\uput[0](8.2,-4.7){$\Dfour$} 
\uput[0](2.6,1.6){$\Dtwo$} 
\uput[270](8.6,1.2){$\Z/2$} 
\psline[linewidth=0.04](8.99,3.5)(8.5,3.5)(8.5,3.0)
\psline[linewidth=0.04](9.2,3.3)(8.7,3.3)(8.7,2.8)
\psline[linewidth=0.04](10.7,-1.7)(11.1,-2.1)(11.5,-1.7)
\psline[linewidth=0.04](10.7,-1.9)(11.1,-2.3)(11.5,-1.9)
\psline[linewidth=0.04](5.49875,-6.0)(5.81875,-6.4)(5.55875,-6.8)
\psline[linewidth=0.04](5.77875,-6.0)(6.03875,-6.4)(5.79875,-6.8)
\end{pspicture} 
}
\end{center} 
Here, the three edges $NM$, $NM'$ and $N'M'$ have to be identified as indicated by the arrows.
All of the seven triangles belong with their interior to the $2$-torsion subcomplex, 
each with stabilizer $\Z/2$, except for the one which is marked to have stabilizer $\Dtwo$.
Using {torsion subcomplex reduction} (cf.~\cite{accessingFarrell}), 
we reduce this subcomplex to

\begin{center}
 \scalebox{0.8} 
{
\begin{pspicture}(-1.9,-0.9)(8.5,0.3)
        \psdots(-0.0,0.0)
        \psline(-0.0,0.0)(2.0,0.0)
        \uput{0.1}[90](-0.0,0.0){ $\Sfour$}
                \uput{0.4}[270](-0.1,0.2){ $O$}
        \psdots(2,0.0)
                \uput{0.1}[90](1.0,0.0){ $\Dtwo$}
        \uput{0.1}[90](2.0,0.0){ $\Dsix$}
                \uput{0.4}[270](2.0,0.2){ $Q$}
        \psline(2,0.0)(8,0.0)
                        \uput{0.1}[90](3.0,0.0){ $\Z/2$}
        \uput{0.1}[90](4.0,0.0){ $\Sfour$}
                \uput{0.4}[270](4.0,0.2){$M$}
        \psdots(4,0.0)
                        \uput{0.1}[90](5.0,0.0){ $\Dfour$}
               \uput{0.1}[90](6.0,0.0){ $\Sfour$}
                  \uput{0.2}[270](6.0,0.0){$P$}
                          \psdots(6,0.0)
                      \uput{0.1}[90](7.0,0.0){ $\Dfour$}
        \uput{0.1}[90](8.0,0.0){ $\Dfour$}
                \uput{0.4}[270](8.0,0.2){$N'$}
        \psdots(8,0.0)
\end{pspicture} 
}
\end{center}
and then to
\begin{center}
 \scalebox{0.8} 
{
\begin{pspicture}(-1.9,-0.2)(8.0,0.3)
        \uput{0.1}[90](2.0,0.0){ $\Sfour$}
        \psdots(2,0.0)
        \psline(2,0.0)(6,0.0)
                        \uput{0.1}[90](3.0,0.0){ $\Z/2$}
        \uput{0.1}[90](4.0,0.0){ $\Sfour$}
        \psdots(4,0.0)
                        \uput{0.1}[90](5.0,0.0){ $\Dfour$}
               \uput{0.1}[90](6.0,0.0){ $\Sfour$}
        \psdots(6,0.0)
\end{pspicture} 
}
\end{center}
which is the geometric realization of Soul\'e's diagram of cell stabilizers. 
This yields the mod $2$ Farrell cohomology as specified in~\cite{Soule}.

\bigskip

Also for $\op{PSL}_4(\mathbb{Z})$, Rigid Facets Subdivision allows us to obtain the $2$-torsion subcomplex,
but because of the complexity the latter (see Table~\ref{non-reduced 2-torsion subcomplex}), 
the current implementation of Torsion Subcomplex Reduction does not get it down to manageable size.
In upcoming joint work, the authors are going to present a new implementation of Torsion Subcomplex Reduction,
which will overcome this problem using Discrete Morse Theory.

\begin{table} 
 \caption{Numbers of cells in the $2$-torsion subcomplex for PSL$_4(\Z)$ before reduction, 
 obtained after rigid facets subdivsion of the studied cell complex, sorted into isomorphism types of their stabilizers.
Here, $G_1$ and $G_2$ are non-trivial group extensions 
$1 \to \Afour \times \Afour \to G_1 \to \Ctwo \to 1$
and $1 \to G_0 \to G_2 \to \Ctwo \to 1$
for $1 \to (\Z/2\Z)^4 \to G_0 \to \Z/3\Z \to 1$}
\label{non-reduced 2-torsion subcomplex}
$\begin{array}{|l|r|r|r|r|r|r|r|r|r|}
\hline &&&&&&&&&\\
\text{Stabilizer type} & \Afour &       G_1 & \CtwoSq& \Sfour &\Ctwo&\Deight & \CtwoCube &        G_2         & \Cfour \\
\hline &&&&&&&&&\\
\text{Vertices}        & 2      &       1   &     5  &  4     &  1  &   2    &       1   &         1          &  0  \\
\text{Edges}           & 2      &       0   &    24  &  2     &101  &   5    &       1   &         0          &  5  \\
\text{2-cells}         & 0      &       0   &    27  &  0     &326  &   0    &       0   &         0          &  4  \\
\text{3-cells}         & 0      &       0   &     8  &  0     &340  &   0    &       0   &         0          &  0  \\
\text{4-cells}         & 0      &       0   &     0  &  0     &116  &   0    &       0   &         0          &  0  \\
\hline
\end{array}$
\end{table}

\begin{table}
 \caption{Numbers of cells in the individual dimensions, of the non-rigid fundamental domain $X$ for Sp$_4(\Z)$ described in~\cite{MacPhersonMcConnell} and implemented by Dutor Sikiri\'{c}~\cite{HAP}, 
 its rigid facets subdivsion RFS$(X)$, its virtually simplicial subdivision VSS$(X)$, its hybrid subdivision HyS$(X)$ and its barycentric subdivision BCS$(X)$.}
 \label{Numbers of cells for Sp4Z} 
\begin{tabular}{|r|c|c|c|c|c|}
\hline &&&&&\\
Dimension            			&   0 &  1   &     2 &    3   &    4   \\
\hline &&&&&\\
In $X$: \# Orbits | \# Cells  & 2 | 76 & 2 | 216 & 3 | 180 & 3 | 40 & 1 | 1 \\
In  RFS$(X)$: \# Orbits | \# Cells & 8 | 185 & 31 | 800 & 54 | 1048 & 30 | 448    & 1 | 16    \\ 
In VSS$(X)$: \# Orbits | \# Cells & 8 | 185 & 39 | 1152 & 106 | 2536 & 122 | 2352 & 49 | 784   \\
In  HyS$(X)$: \# Orbits | \# Cells & 8 | 185& 35 | 864 & 72 | 1336 & 53 | 816  & 10 | 160  \\
In BCS$(X)$ : \# Orbits | \# Cells  &11 | 513 & 90 | 3488 & 295 | 7904 & 368 | 7392 & 154 | 2464   \\
\hline
\end{tabular} \normalsize
\end{table}

\section{Example: Farrell--Tate cohomology of \texorpdfstring{$\op{Sp}_4(\mathbb{Z})$}{Sp4Z} at odd primes}
\label{sp4}

We shortly discuss the results of applying rigid facets subdivision and torsion subcomplex reduction (for odd primes) to the non-rigid cell complex describing the action of $\op{Sp}_4(\mathbb{Z})$ on the associated symmetric space $\op{Sp}_4(\mathbb{R})/\op{U}(2)$, described in \cite{MacPhersonMcConnell} and implemented by Mathieu Dutour Sikiri\'c in \cite{HAP}. 

The $3$-torsion subcomplex for $\op{Sp}_4(\mathbb{Z})$ is a single vertex whose stabilizer group is the group $[72,30]$ in the GAP \verb!SmallGroup! library. This group is of the form $\op{C}_3\times ((\op{C}_6\times\op{C}_2)\rtimes \op{C}_2)$ and its mod 3 cohomology is isomorphic to the mod 3 cohomology of $\op{C}_3\times \op{S}_3$. In particular, we get isomorphisms
\[
\op{H}^i(\op{Sp}_4(\mathbb{Z}),\mathbb{F}_3) \cong \op{H}^i(\op{C}_3\times\op{S}_3,\mathbb{F}_3), \quad i>\op{vcd}(\op{Sp}_4(\mathbb{Z})).
\]
The Hilbert--Poincar{\'e} series for the mod 3 cohomology of $\op{C}_3\times\op{S}_3$ is given by 
\[
\op{HP}_{\op{C}_3\times\op{S}_3}(T;3)=\frac{(1+T)(1+T^3)}{(1-T^2)(1-T^4)}.
\]
In \cite{brownstein:lee}*{(6.7)}, the Hilbert--Poincar{\'e}-series for $\op{Sp}_4(\mathbb{Z})$ has been computed. In the notation of loc.cit., the difference to the Hilbert--Poincar{\'e}-series for $\op{C}_3\times \op{S}_3$ above is given by
\[
P(\Gamma^6_0)-P(B'\rtimes\mathbb{Z}/2)=\frac{1+T^3+T^4+T^5}{1-T^4}-\frac{1+T+T^3+T^4}{1-T^4}=\frac{T^5-T}{1-T^4}=-T.
\]
In particular, the Hilbert--Poincar{\'e} series for $\op{C}_3\times\op{S}_3$ and $\op{Sp}_4(\mathbb{Z})$ agree above the virtual cohomological dimension of $\op{Sp}_4(\mathbb{Z})$ and thus the computation via rigid facets subdivision and torsion subcomplex reduction agrees with the computation of Brownstein and Lee in \cite{brownstein:lee}. 

The $5$-torsion subcomplex for $\op{Sp}_4(\mathbb{Z})$ is a single vertex whose stabilizer group is $\op{C}_{10}$, whose mod 5 cohomology is isomorphic to the mod 5 cohomology of $\op{C}_5$. This agrees directly with \cite{brownstein:lee}*{Corollary 6.3} in degrees above the virtual cohomological dimension. 

Table~\ref{Numbers of cells for Sp4Z} provides an overview of the numbers of cells resp. orbits in the complexes subdivided by rigid facets subdivision and barycentric subdivision, respectively.

\section{Example: Farrell--Tate cohomology of \texorpdfstring{$\op{PSL}_4(\mathbb{Z})$}{PSL4Z} at the prime 3} 
\label{Farrell--Tate}

Applying the rigid facets subdivision algorithm to the $\op{PSL}_4(\Z)$-equivariant cell complex from~\cite{dutour:ellis:schuermann},
extracting the $3$-torsion subcomplex, and reducing it using the methods of~\cite{accessingFarrell}, 
we get the following graph of groups $\mathcal{T}$, 
decorated with the groups stabilizing the cells that are the pre-images of the projection to the quotient space.

\begin{center}
\begin{pspicture}(-0.1,-0.1)(8,4)
      \pscircle(2,2){1.52}
      \psdots(3.5,2)
      \psdots(0.5,2)      
      \uput{0}[180](2.2,3.8){$\op{S}_3$}
      \uput{0}[180](2.2,0.2){$\op{S}_3$}
      \uput{0}[180](0.4,2){$\op{S}_3 \times \op{S}_3$}
      \uput{0}[180](3.4,2){$\op{S}_3 \times \op{S}_3$}
      \psline(3.5,2)(7.5,2)
       \uput{0.2}[270](4.5,2){$\op{E}_9\rtimes\op{C}_2$}
	\uput{0.2}[270](6.5,2){$\op{C}_3$}
      \psdots(7.5,2)
       \uput{0.1}[0](7.5,2){$\op{S}_3$}
      \psdots(5.5,2)      
       \uput{0.2}[90](5.5,2){$\op{E}_9\rtimes\op{C}_2$}
\end{pspicture} 
\end{center}
Here, $\op{E}_9\rtimes\op{C}_2$ denotes the semi-direct product of the elementary abelian group of order $9$ with $\op{C}_2$, where $\op{C}_2$ acts by inversion (corresponding to the group [18,4] in GAP's \verb!SmallGroups! library). The machine computation provided the following system of morphisms among the above cell stabilizers. The $\op{E}_9 \rtimes \op{C}_2$ edge stabilizer admits an isomorphism of groups $\phi$ (not the identity, though) to the $\op{E}_9 \rtimes \op{C}_2$ vertex stabilizer and an inclusion into the $\op{S}_3 \times \op{S}_3$ vertex stabilizer. Of the two $\op{S}_3$ edge stabilizers, one has maps $\op{diag}(1,1)$ and $\op{diag}(1,0)$ to the two $\op{S}_3 \times \op{S}_3$ vertex stabilizers, and the other one has maps $\op{diag}(1,-1)$ and  $\op{diag}(-1,-1)$ to the two $\op{S}_3 \times \op{S}_3$ vertex stabilizers. The $\op{C}_3$ edge stabilizer admits an inclusion into the $\op{E}_9 \rtimes \op{C}_2$ vertex stabilizer, and an inclusion into the $\op{S}_3$ vertex stabilizer.

By the properties of torsion subcomplex reduction, the 
$\op{PSL}_4(\Z)$-equivariant cohomology of the $3$-torsion subcomplex is isomorphic to the 
$\op{PSL}_4(\Z)$-equivariant cohomology of the above graph of groups $\mathcal{T}$. 
Similarly, the Farrell--Tate cohomology of $\op{PSL}_4(\Z)$ at the prime $3$ is isomorphic to the Farrell--Tate cohomology of the above graph of groups. In the following, we evaluate the isotropy spectral sequence 
\[
E_1^{p,q}=\bigoplus_{\sigma\in \mathcal{T}_p}\op{H}^q(\op{Stab}(\sigma);\mathbb{F}_3)\Rightarrow \widehat{\op{H}}^{p+q}(\op{PSL}_4(\mathbb{Z});\mathbb{F}_3) 
\]
converging to Farrell--Tate cohomology. As we only consider a graph, the spectral sequence is concentrated in the two columns $p=0,1$. The differential $\op{d}_1$ is induced from the inclusions of subgroups, up to the sign coming from the choice of orientation of the graph. Since the spectral sequence is only concentrated in the first two columns, we will have $E_2=E_\infty$. Since we are interested in field coefficients, there are no extension problems to solve at the $E_\infty$-page.
 
The relevant cohomology groups of the finite groups are:
\begin{itemize}
\item $\op{H}^\bullet(\op{C}_3;\mathbb{F}_3)\cong\mathbb{F}_3[x](a)$ with $\deg a=1$ and $\deg x=2$.
\item $\op{H}^\bullet(\op{S}_3;\mathbb{F}_3)\cong \mathbb{F}_3[y](b)$ with $\deg b=3$ and $\deg y=4$.
\item By the K\"unneth formula, $\op{H}^\bullet(\op{S}_3\times\op{S}_3;\mathbb{F}_3)\cong \op{H}^\bullet(\op{S}_3;\mathbb{F}_3)^{\otimes 2}$. 
\item By the Hochschild--Serre spectral sequence, $\op{H}^\bullet(\op{E}_9\rtimes\op{C}_2;\mathbb{F}_3)\cong \mathbb{F}_3[x_1,x_2](a_1,a_2)^{\op{C}_2}$ where $\deg x_i=2$, $\deg a_i=1$ and  $\op{C}_2$ acts by multiplication with $-1$ on all the generators.
\end{itemize}
To describe the $\op{d}_1$-differential, it is enough to note that the restriction map associated to the inclusion $\op{C}_3\hookrightarrow\op{S}_3$ is the inclusion of $\op{C}_2$-invariants.

Now, for the evaluation of the spectral sequence, we first deal with the edges attached to the loop. 
\begin{enumerate}
\item
The restriction map 
\[
\phi\oplus (\op{Res}^{\op{S}_3}_{\op{C}_3} \circ \op{pr}_2^\ast) : 
\op{H}^\bullet(\op{E}_9\rtimes\op{C}_2;\mathbb{F}_3)\to 
\op{H}^\bullet(\op{E}_9\rtimes\op{C}_2;\mathbb{F}_3)\oplus 
\op{H}^\bullet(\op{C}_3;\mathbb{F}_3)
\]
is injective, and the cokernel is isomorphic to $\op{H}^\bullet(\op{C}_3;\mathbb{F}_3)$. Here $\phi$ denotes the isomorphism $\op{E}_9\rtimes\op{C}_2\cong\op{E}_9\rtimes\op{C}_2$ appearing as stabilizer inclusion in the reduced torsion subcomplex.
\item
The inclusion of the dihedral vertex group into the cyclic edge group is an injection
\[
\op{Res}^{\op{S}_3}_{\op{C}_3} : \op{H}^\bullet(\op{S}_3;\mathbb{F}_3)\hookrightarrow \op{H}^\bullet(\op{C}_3;\mathbb{F}_3)
\]
given by the inclusion of the invariant elements for the $\op{C}_2$-action by $-1$. Therefore, the cokernel is concentrated in degrees $1,2\bmod 4$ (except for the degree $0$).
\end{enumerate}

Therefore, we can reduce the $E_1$-page of the spectral sequence as follows: from (1), we find that we can remove the two summands for $\op{E}_9\rtimes\op{C}_2$ from the columns $p=0$ and $p=1$, respectively; but in turn, we have to replace the restriction map for $\op{E}_9\rtimes\op{C}_2\hookrightarrow \op{S}_3\times\op{S}_3$ by a map from the cohomology of $\op{S}_3\times\op{S}_3$ to the cokernel of the restriction map in (2). However, since the latter is concentrated in degrees 1 and 2, the induced restriction map is trivial in the generating degrees $3$ and $4$, hence it is trivial. Therefore, the $E_1$-page decomposes: one contribution comes from the cokernel of the restriction map $\op{H}^\bullet(\op{S}_3;\mathbb{F}_3)\hookrightarrow \op{H}^\bullet(\op{C}_3;\mathbb{F}_3)$, the other contribution comes from the loop connecting the two copies of $\op{S}_3\times\op{S}_3$ via the $\op{S}_3$-edges. 

The cohomology of the loop is computed as follows:

\begin{enumerate}
 \item[(3)]
The restriction maps $\op{diag}(1,1), \op{diag}(1,0), \op{diag}(1,-1)$ and $\op{diag}(-1,-1) :$
\[
 \mathbb{F}_3[x_4,y_4](a_3,b_3)\cong \op{H}^\bullet(\op{S}_3\times\op{S}_3;\mathbb{F}_3)\to
\op{H}^\bullet(\op{S}_3;\mathbb{F}_3)\cong\mathbb{F}_3[z_4](c_3)
\]
are surjective. 
On the kernel of $\op{diag}(1,1)$, the restriction of $\op{diag}(1,-1)$ is still surjective.
On the kernel of $\op{diag}(-1,-1)$, the restriction of $\op{diag}(1,0)$ is still surjective.
\end{enumerate}

Therefore, the cohomology of one edge of the loop is killed already by the restriction map from any one of the vertex groups. 
Number the $\op{S}_3\times\op{S}_3$-vertices by $1$ and $2$, 
and the $\op{S}_3$-edges by $a$ and $b$. 
The restriction from the vertex group $1$ to the edge $a$ is surjective. 
Removing this part from the spectral sequence, 
the restriction from the vertex group $2$ to the edge $a$ is trivial, 
but we still have the restriction to the edge $b$. 
This kills the edge cohomology $b$, 
showing that the differential $d_1$ is surjective in the loop part of the $E_1$-page. 
The kernel of the differential consists then exactly of two copies of the kernel of a restriction map 
$\op{Res}^{\op{S}_3\times\op{S}_3}_{\op{S}_3}$. 

\begin{theorem}
\label{thm:psl4}
The Farrell--Tate cohomology of the group $\op{PSL}_4(\mathbb{Z})$
(with coefficients in $\mathbb{F}_3$) \emph{in degrees $\geq 2$} is given as follows:
$$
\widehat{\op{H}}^\bullet(\op{PSL}_4(\mathbb{Z});\mathbb{F}_3)
\cong 
\left(\ker\op{Res}^{\op{S}_3\times\op{S}_3}_{\op{S}_3} \right)^{\oplus 2} 
\oplus \op{coker}\left(\op{H}^{\bullet-1}(\op{S}_3;\mathbb{F}_3)\to\op{H}^{\bullet-1}(\op{C}_3;\mathbb{F}_3)\right) . 
$$
This, in particular also computes the $3$-torsion group cohomology of $\op{PSL}_4(\mathbb{Z})$ above the virtual cohomological dimension. 

The cokernel of the $d_1$-differential in degree $0$ 
and hence the first cohomology $\widehat{\op{H}}^1(\op{PSL}_4(\mathbb{Z});\mathbb{F}_3)$ 
is of $\mathbb{F}_3$-rank $1$, coming from the loop of the $3$-torsion graph.
\end{theorem}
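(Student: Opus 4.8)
The plan is to assemble the claimed description of $\widehat{\op{H}}^\bullet(\op{PSL}_4(\mathbb{Z});\mathbb{F}_3)$ directly from the analysis of the isotropy spectral sequence that has already been carried out in the text preceding the theorem. The spectral sequence is concentrated in columns $p=0,1$, so $E_2 = E_\infty$ and, with $\mathbb{F}_3$-coefficients, there are no extension problems; hence $\widehat{\op{H}}^n(\op{PSL}_4(\mathbb{Z});\mathbb{F}_3) \cong \ker(d_1)^{0,n} \oplus \op{coker}(d_1)^{0,n-1}$, where $d_1$ runs from the $p=0$ column (direct sum of vertex-group cohomologies) to the $p=1$ column (direct sum of edge-group cohomologies). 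So the entire content is to compute $\ker d_1$ and $\op{coker} d_1$.

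First I would invoke the decomposition of the $E_1$-page established above: after using item (1) to cancel the two $\op{E}_9\rtimes\op{C}_2$ summands (the restriction map $\phi \oplus (\op{Res}\circ\op{pr}_2^\ast)$ is injective with cokernel $\op{H}^\bullet(\op{C}_3;\mathbb{F}_3)$), and after observing that the residual map $\op{H}^\bullet(\op{S}_3\times\op{S}_3;\mathbb{F}_3) \to \op{coker}(\op{Res}^{\op{S}_3}_{\op{C}_3})$ is trivial because its target is concentrated in degrees $\equiv 1,2 \bmod 4$ while $\op{H}^\bullet(\op{S}_3\times\op{S}_3;\mathbb{F}_3)$ is generated in degrees $3$ and $4$, the complex splits as a direct sum of two subcomplexes. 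The first subcomplex contributes only to $\op{coker} d_1$, namely the summand $\op{coker}(\op{H}^{\bullet-1}(\op{S}_3;\mathbb{F}_3)\to\op{H}^{\bullet-1}(\op{C}_3;\mathbb{F}_3))$ of the theorem (the shift by one reflecting that this piece sits in column $p=1$). The second subcomplex is the ``loop part'': the two $\op{S}_3\times\op{S}_3$ vertex groups mapping via the four restriction maps $\op{diag}(1,1), \op{diag}(1,0), \op{diag}(1,-1), \op{diag}(-1,-1)$ to the two $\op{S}_3$ edge groups $a,b$.

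For the loop part I would run precisely the argument sketched in (3) and the paragraph after it: the restriction from vertex $1$ to edge $a$ is surjective, so after splitting off that surjection the restriction from vertex $2$ to edge $a$ becomes trivial but the restriction from vertex $2$ to edge $b$ remains surjective (by the ``still surjective on the kernel'' statements in (3)); hence $d_1$ is surjective on the whole loop part, giving no contribution to $\op{coker}$ except in degree $0$, and its kernel is the sum of two copies of $\ker\op{Res}^{\op{S}_3\times\op{S}_3}_{\op{S}_3}$. Combining, $\ker d_1 = (\ker\op{Res}^{\op{S}_3\times\op{S}_3}_{\op{S}_3})^{\oplus 2}$ and $\op{coker} d_1$ in degrees $\geq 1$ is $\op{coker}(\op{H}^{\bullet-1}(\op{S}_3;\mathbb{F}_3)\to\op{H}^{\bullet-1}(\op{C}_3;\mathbb{F}_3))$, which yields the displayed formula once one also records that in the loop part $d_1$ in degree $0$ is not surjective: both vertices contribute $\mathbb{F}_3$ in degree $0$, the two edges contribute $\mathbb{F}_3^{\oplus 2}$, and the $2\times 2$ matrix of restriction maps on $\op{H}^0$ has rank $1$ (all four maps being the identity on $\mathbb{F}_3 = \op{H}^0$), leaving a one-dimensional cokernel, hence $\widehat{\op{H}}^1$ has $\mathbb{F}_3$-rank $1$. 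This gives the last sentence of the theorem. The reason the theorem is stated only in degrees $\geq 2$ is that in low degrees the contributions of $\op{H}^0$ of the various groups interfere, so the clean splitting is guaranteed only once the degree is large enough that the ``generated in degrees $3,4$'' and ``concentrated in degrees $1,2 \bmod 4$'' dichotomy takes effect.

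I expect the main obstacle to be purely bookkeeping rather than conceptual: keeping careful track of the degree shifts between the $p=0$ and $p=1$ columns, and justifying rigorously the claim in (3) that on the kernel of one vertex restriction the other vertex restriction is still surjective — this requires writing $\op{H}^\bullet(\op{S}_3\times\op{S}_3;\mathbb{F}_3) = \mathbb{F}_3[y_1,y_2](b_1,b_2)$ explicitly and checking that, e.g., $\op{diag}(1,-1)$ surjects onto $\mathbb{F}_3[z](c)$ after restricting to $\ker\op{diag}(1,1)$, which amounts to an elementary computation with the polynomial/exterior generators and the $\op{C}_2$-action by $-1$. Once that surjectivity is in hand, the splitting and the identification of $\ker d_1$ with $(\ker\op{Res}^{\op{S}_3\times\op{S}_3}_{\op{S}_3})^{\oplus 2}$ follow formally from the structure of a length-one cochain complex of graded vector spaces.
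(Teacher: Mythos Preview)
Your proposal is correct and follows essentially the same approach as the paper: the theorem is simply the conclusion drawn from the preceding analysis of the isotropy spectral sequence (items (1)--(3) and the loop computation), and you have faithfully recapitulated that argument, including the degree-$0$ rank computation for the loop that yields $\widehat{\op{H}}^1$. The only additional detail you supply beyond the paper is the explicit $2\times 2$ matrix reasoning for the degree-$0$ cokernel of the loop part, which the paper leaves implicit.
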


\begin{remark}
The kernel comes from the $p=0$ column of the $E_2=E_\infty$-page. The cokernel comes from the $p=1$ column and consequently has a shift. The cup product on the kernel is the one induced from cohomology of $\op{S}_3\times\op{S}_3$, and cup-product with classes in the cokernel is always $0$.
\end{remark}

Inspired by Grunewald, we consider the Hilbert--Poincar{\'e} series of the Farrell--Tate cohomology of $\op{PSL}_4(\mathbb{Z})$ with coefficients in $\mathbb{F}_\ell$ :
$$ \op{HP}_{\op{PSL}_4(\mathbb{Z})}(T;\ell) := \sum\limits_{q = 1}^{\infty} \dim \widehat{\op{H}}^q(\op{PSL}_4(\mathbb{Z});\mathbb{F}_\ell) \cdot T^q.$$
\begin{corollary}
 The Hilbert--Poincar{\'e} series of the $3$-torsion Farrell--Tate cohomology of 
 $\op{PSL}_4(\mathbb{Z})$ (for degrees $\geq 1$) is then 
\[
\op{HP}_{\op{PSL}_4(\mathbb{Z})}(T;3)=T +\frac{2(T^3+T^4+T^6+T^7)}{(1-T^4)^2}+\frac{T^2+T^3}{1-T^4}.
\]
\end{corollary}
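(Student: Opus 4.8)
The plan is to read the dimensions $\dim_{\mathbb{F}_3}\widehat{\op{H}}^q(\op{PSL}_4(\mathbb{Z});\mathbb{F}_3)$ off Theorem~\ref{thm:psl4} degree by degree and package them into a rational generating function, using the explicit mod-$3$ cohomology rings of $\op{C}_3$, $\op{S}_3$ and $\op{S}_3\times\op{S}_3$ recorded just before that theorem. By Theorem~\ref{thm:psl4}, for $q\geq 2$ one has $\dim\widehat{\op{H}}^q = 2\dim\bigl(\ker\op{Res}^{\op{S}_3\times\op{S}_3}_{\op{S}_3}\bigr)_q + \dim\op{coker}\bigl(\op{H}^{q-1}(\op{S}_3;\mathbb{F}_3)\to\op{H}^{q-1}(\op{C}_3;\mathbb{F}_3)\bigr)$, while $\dim\widehat{\op{H}}^1=1$. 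Hence $\op{HP}_{\op{PSL}_4(\mathbb{Z})}(T;3)=T+2\,Q_{\ker}(T)+T\cdot Q_{\op{coker}}(T)$, where $Q_{\ker}$ is the Poincar\'e series of $\ker\op{Res}^{\op{S}_3\times\op{S}_3}_{\op{S}_3}$, $Q_{\op{coker}}$ is that of $\op{coker}\bigl(\op{H}^\bullet(\op{S}_3;\mathbb{F}_3)\to\op{H}^\bullet(\op{C}_3;\mathbb{F}_3)\bigr)$, and the factor $T$ in the last summand records the degree shift coming from the $p=1$ column of the isotropy spectral sequence.

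First I would write down the Poincar\'e series of the finite-group cohomologies involved: $\op{H}^\bullet(\op{C}_3;\mathbb{F}_3)\cong\mathbb{F}_3[x](a)$ with $\deg a=1$, $\deg x=2$ gives $(1+T)/(1-T^2)$; $\op{H}^\bullet(\op{S}_3;\mathbb{F}_3)\cong\mathbb{F}_3[y](b)$ with $\deg b=3$, $\deg y=4$ gives $(1+T^3)/(1-T^4)$; and by the K\"unneth formula $\op{H}^\bullet(\op{S}_3\times\op{S}_3;\mathbb{F}_3)$ has Poincar\'e series $\bigl((1+T^3)/(1-T^4)\bigr)^2$. Since $\op{Res}^{\op{S}_3}_{\op{C}_3}$ is injective (it is the inclusion of $\op{C}_2$-invariants), $Q_{\op{coker}}(T)=(1+T)/(1-T^2)-(1+T^3)/(1-T^4)=(T+T^2)/(1-T^4)$, so $T\cdot Q_{\op{coker}}(T)=(T^2+T^3)/(1-T^4)$. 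Since the restriction $\op{Res}^{\op{S}_3\times\op{S}_3}_{\op{S}_3}$ appearing in the theorem is surjective --- which is exactly item~(3) of the analysis preceding the theorem --- one gets $Q_{\ker}(T)=\bigl((1+T^3)/(1-T^4)\bigr)^2-(1+T^3)/(1-T^4)=(1+T^3)(T^3+T^4)/(1-T^4)^2=(T^3+T^4+T^6+T^7)/(1-T^4)^2$, whence $2\,Q_{\ker}(T)=2(T^3+T^4+T^6+T^7)/(1-T^4)^2$. Adding the three summands produces the asserted formula.

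There is no genuine obstacle here: the substantive content is Theorem~\ref{thm:psl4}, and what remains is elementary manipulation of rational functions together with a little bookkeeping in low degrees. The point requiring attention is precisely that bookkeeping: $\ker\op{Res}^{\op{S}_3\times\op{S}_3}_{\op{S}_3}$ is concentrated in degrees $\geq 3$ and the shifted cokernel $(T^2+T^3)/(1-T^4)$ in degrees $\geq 2$, so the part of Theorem~\ref{thm:psl4} valid in degrees $\geq 2$ accounts for all of these contributions and none of them interferes with degree $1$; the degree-$1$ contribution is invisible in the displayed isomorphism of Theorem~\ref{thm:psl4} but is supplied by its last sentence, producing the summand $T$ and making the series begin in degree $1$, as the definition of $\op{HP}_{\op{PSL}_4(\mathbb{Z})}(T;3)$ requires.
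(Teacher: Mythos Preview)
Your proof is correct and is exactly the computation the paper leaves implicit: the corollary is stated without proof, as an immediate consequence of Theorem~\ref{thm:psl4} together with the Poincar\'e series of $\op{C}_3$, $\op{S}_3$ and $\op{S}_3\times\op{S}_3$ listed just before it. Your derivation of $Q_{\ker}$ and $Q_{\op{coker}}$, the use of the surjectivity from item~(3), and the degree-$1$ bookkeeping all match the paper's setup, so there is nothing to add.
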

\begin{remark}
Note that the above calculation describes the Farrell--Tate cohomology in all degrees, not just some small ones. Essentially, the computer calculation produces the reduced torsion subcomplex (which encodes the cohomology for all degrees). The spectral sequence is evaluated using the cup-product structure. Note that the finiteness results for group homology imply that the cup-product structure for both group and Farrell--Tate cohomology is finitely generated. Using suitable commutative algebra packages, such computations of the ring structure (and therefore additive computations for all cohomological degrees) could probably also be automated. 
\end{remark}

We make the following consideration on the compatibility of our result for Farrell--Tate cohomology
with the result of Dutour--Ellis--Sch\"urmann~\cite{dutour:ellis:schuermann} 
for group homology in low degrees. 
The isomorphism types computed in the latter article are to correspond as follows to 
the evaluation of our above Hilbert-Poincar\'e series in those degrees.
\begin{center}
$
\op{H}_q(\op{PSL}_4(\mathbb{Z});\mathbb{Z}) \cong
$\scriptsize $
\begin{cases}
 0, & q = 1,\\
 (\mathbb{Z}/2)^3, & q = 2,\\
 \mathbb{Z} \oplus (\mathbb{Z}/4)^2 \oplus (\mathbb{Z}/3)^2 \oplus \mathbb{Z}/5, & q = 3,\\
 (\mathbb{Z}/2)^4 \oplus \mathbb{Z}/5, & q = 4,\\
 (\mathbb{Z}/2)^{13}, & q = 5,\\
\end{cases}
$
\hfill
\normalsize
---
\hfill 
$
\dim \widehat{\op{H}}^q(\op{PSL}_4(\mathbb{Z});\mathbb{F}_3) =
$\scriptsize $
\begin{cases}
 1,& q = 1,\\
  1,& q = 2,\\
   3,& q = 3,\\
    2,& q = 4,\\
     0,& q = 5.\\
\end{cases}
$ \normalsize
\end{center}
For this to be consistent, the Farrell--Tate cohomology groups in degrees $1$ and $2$ need to vanish in group homology; 
so, these should be annihilated by differentials from the orbit space. 
We have evidence for this in degree $1$, 
since the loop in the graph becomes contractible in the orbit space of the full locally symmetric space.  
In degree $3$, one of the summands in $\op{H}_3(\op{PSL}_4(\mathbb{Z});\mathbb{Z})$
is rationally non-trivial and must come from the orbit space.
This means that only the submodule $(\mathbb{Z}/3)^2$ can come from Farrell--Tate cohomology,
and the third dimension that we observe in degree~$3$ Farrell--Tate cohomology
must belong to the degree~$2$ stabilizer cohomology class that is annihilated by the above mentioned differentials from the orbit space.

From Theorem~\ref{thm:psl4}, we deduce that the degree~$2$ Farrell--Tate class can only 
come from $$\op{coker}\left(\op{H}^{\bullet-1}(\op{S}_3;\mathbb{F}_3)\to\op{H}^{\bullet-1}(\op{C}_3;\mathbb{F}_3)\right).$$
Then, this class and its group homology counterpart sit at position $p = 1,$ $q = 1$ 
in the respective equivariant spectral sequence, 
and hence the annihilating differential, emanating from the orbit space homology module 
$\mathbb{Z} \subset \op{H}_3(\op{PSL}_4(\mathbb{Z});\mathbb{Z})$ 
sitting at position $p = 3,$ $q = 0$, must be of second degree.

In degrees~$4$ and~$5$, the dimensions already agree via the Universal Coefficient Theorem, 
so here we infer that the submodule $(\mathbb{Z}/3)^2$ in degree~$3$ 
should actually come from Farrell--Tate cohomology,
so it should be stabilizer cohomology that is not hit by higher degree differentials. 

\section{Homological 5-torsion in \texorpdfstring{$\op{PSL}_4(\mathbb{Z})$}{PSL4Z}}
\label{sec:psl4}
We applied the rigid facets subdivision algorithm to the $\op{PSL}_4(\mathbb{Z})$-equivariant cell complex of~\cite{dutour:ellis:schuermann}, extracted the $5$-torsion subcomplex, and reduced it using the methods of~\cite{accessingFarrell} to the following graph $\mathcal{T}$:
\scalebox{0.7}{
\begin{pspicture}(-1,-0.5)(1,0.5)
      \pscircle(0,0){0.52}
      \psdots(0.5,0)
      \uput{0.1}[180](-0.5,0){$\op{D}_5$}
      \uput{0.1}[0](0.5,0){$\op{D}_5$}
 \end{pspicture} 
 }
The $d^1$-differential of the equivariant spectral sequence on  $\mathcal{T}$ is zero, because the isomorphisms at edge end and edge origin cancel each other. 
Then the $E_1 = E_\infty$ page is concentrated in the columns $p = 0$ and $1$, with dimensions over $\mathbb{F}_5$ being $1$ in rows $q$ congruent to $3$ or $4$ mod $4$, and zero otherwise. This yields
\begin{proposition} \label{5-torsion for PSL4Z}
We observe on Farrell cohomology: 
$\dim_{\mathbb{F}_5} \widehat{\op{H}}^{p+q}(\op{PSL}_4(\mathbb{Z});\mathbb{F}_5) = $
\scriptsize $
\begin{cases}
                                                                                        1, &  p + q \equiv 1\mod 4, \\
                                                                                        0, &  p + q \equiv 2 \mod 4, \\
                                                                                        1, &  p + q \equiv 3 \mod 4, \\
                                                                                        2, &  p + q \equiv 4 \mod 4.                                                                                      
                                                                                       \end{cases}
$\normalsize                                  
\end{proposition}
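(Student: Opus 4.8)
The statement to prove is Proposition~\ref{5-torsion for PSL4Z}, describing $\dim_{\mathbb{F}_5}\widehat{\op{H}}^{p+q}(\op{PSL}_4(\mathbb{Z});\mathbb{F}_5)$. The plan is to run the isotropy (equivariant) spectral sequence for the reduced $5$-torsion subcomplex $\mathcal{T}$ displayed above, exactly in the same spirit as the computation leading to Theorem~\ref{thm:psl4}. Since $\mathcal{T}$ is a graph (a loop at a single vertex with stabilizer $\op{D}_5$ and edge stabilizer $\op{D}_5$), the spectral sequence
\[
E_1^{p,q}=\bigoplus_{\sigma\in\mathcal{T}_p}\op{H}^q(\op{Stab}(\sigma);\mathbb{F}_5)\Rightarrow\widehat{\op{H}}^{p+q}(\op{PSL}_4(\mathbb{Z});\mathbb{F}_5)
\]
is concentrated in the two columns $p=0,1$, so $E_2=E_\infty$ and, with field coefficients, there are no extension problems; hence the dimension in total degree $n$ is $\dim\ker(d_1)^{0,n}+\dim\op{coker}(d_1)^{0,n}$ (the latter shifted to $p=1$).

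First I would record the mod $5$ cohomology of $\op{D}_5$ (the dihedral group of order $10$): since $|\op{D}_5|=2\cdot 5$ and the Sylow $5$-subgroup is $\op{C}_5$ with the $\op{C}_2$-quotient acting by inversion, $\op{H}^\bullet(\op{D}_5;\mathbb{F}_5)\cong\op{H}^\bullet(\op{C}_5;\mathbb{F}_5)^{\op{C}_2}\cong\mathbb{F}_5[y](b)$ with $\deg b=3$, $\deg y=4$ --- so the Poincar\'e series is $(1+T^3)/(1-T^4)$, i.e.\ dimension $1$ in degrees $\equiv 0,3\bmod 4$ and $0$ otherwise. Next I would analyze the $d_1$-differential. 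The graph has one vertex and one edge, so $d_1\colon \op{H}^\bullet(\op{D}_5;\mathbb{F}_5)\to\op{H}^\bullet(\op{D}_5;\mathbb{F}_5)$ is $\op{res}_{\text{head}}-\op{res}_{\text{tail}}$ (up to orientation sign); as the excerpt already asserts, the two edge-identification isomorphisms at the two ends of the loop cancel each other, so $d_1=0$. Therefore $E_1=E_\infty$, the $p=0$ column contributes $\op{H}^q(\op{D}_5;\mathbb{F}_5)$ and the $p=1$ column contributes $\op{H}^q(\op{D}_5;\mathbb{F}_5)$ shifted up by one in total degree.

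Finally I would assemble the total dimensions: in total degree $n$, the contribution from $p=0$ has dimension $1$ iff $n\equiv 0,3\bmod 4$, and the contribution from $p=1$ has dimension $1$ iff $n-1\equiv 0,3\bmod 4$, i.e.\ iff $n\equiv 1,0\bmod 4$. Adding these gives dimension $0$ for $n\equiv 2$, $1$ for $n\equiv 1$, $1$ for $n\equiv 3$, and $2$ for $n\equiv 0\bmod 4$, which is exactly the claimed table. As an independent consistency check --- paralleling the remark after Proposition~\ref{5-torsion for PSL4Z} and the use of Brown's formula mentioned in the introduction --- one can verify this against a direct Brown-formula computation, since the $5$-torsion is effectively of rank one. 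The only mildly delicate point is the vanishing of $d_1$: one must be sure that the machine-provided pair of stabilizer morphisms attached to the two ends of the loop really are inverse (up to the orientation sign) as maps on $\mathbb{F}_5$-cohomology, rather than merely both being isomorphisms; granting that (as stated in the excerpt), the rest is the routine bookkeeping above, and this is the step I would expect to require the most care.
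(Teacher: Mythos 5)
Your proposal is correct and follows essentially the same route as the paper: the paper likewise runs the isotropy spectral sequence on the reduced $5$-torsion graph (a single loop with vertex and edge stabilizers $\op{D}_5$), observes that $d_1=0$ because the two edge-end isomorphisms cancel, and reads off the two-column $E_1=E_\infty$ page with one-dimensional entries in rows $q\equiv 0,3\bmod 4$, then checks the answer against Brown's complex. Your bookkeeping of the total degrees and the identification $\op{H}^\bullet(\op{D}_5;\mathbb{F}_5)\cong\op{H}^\bullet(\op{C}_5;\mathbb{F}_5)^{\op{C}_2}$ agree with the paper's.
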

We check this result with a computation of $\widehat{\op{H}}^\bullet(\op{PSL}_4(\mathbb{Z});\mathbb{F}_5)$ using Brown's complex~\cite{Brown}*{last chapters}. 
In this case, it is standard that the set $\{1,\zeta_5,\zeta_2^3,\zeta_5^3\}$ is an integral basis of $\mathcal{O}_{\mathbb{Q}(\zeta_5)}$ and in particular $\mathbb{Z}[\zeta_5]=\mathcal{O}_{\mathbb{Q}(\zeta_5)}$ is a Dedekind ring. 

We can therefore use Reiner's result \cite{reiner:1955} to determine conjugacy classes of $\op{C}_5$-subgroups in $\op{GL}_4(\mathbb{Z})$. Since both $\mathbb{Z}$ and $\mathbb{Z}[\zeta_5]$ have trivial class group, there is only one isomorphism class of $\mathbb{Z}[\op{C}_5]$-module with nontrivial action and $\mathbb{Z}$-rank $4$. Hence, there is a unique conjugacy class of cyclic order $5$ subgroup in $\op{GL}_4(\mathbb{Z})$. Since the center of $\op{GL}_4(\mathbb{Z})$ is of order $2$, the same is true for $\op{PGL}_4(\mathbb{Z})$. 

Now there is a necessary modification to deal with the case $\op{SL}_4(\mathbb{Z})$, along the lines of the discussion in \cite{sl2ff}. While conjugacy classes of $\op{C}_5$-subgroups in $\op{GL}_4(\mathbb{Z})$ correspond to isomorphism classes of $\mathbb{Z}[\op{C}_5]$-modules, the conjugacy classes of $\op{C}_5$-subgroups in $\op{SL}_4(\mathbb{Z})$ correspond to such modules \emph{equipped with an additional orientation}, i.e., a choice of isomorphism $\det M\cong \bigwedge^4_{\mathbb{Z}} M\cong \mathbb{Z}$. The conjugacy class  in $\op{GL}_4(\mathbb{Z})$ lifts to $\op{SL}_4(\mathbb{Z})$, and the corresponding module has two different choices of orientation. The Galois group $\op{Gal}(\mathbb{Q}(\zeta_5)/\mathbb{Q})\cong\mathbb{Z}/4\mathbb{Z}$ acts on the set of oriented modules. The action exchanges the orientations. Therefore, there is one conjugacy class of $\op{C}_5$-subgroup in $\op{SL}_4(\mathbb{Z})$ stabilized by $\mathbb{Z}/2\mathbb{Z}\hookrightarrow \op{Gal}(\mathbb{Q}(\zeta_5)/\mathbb{Q})$.

The centralizer of this $\op{C}_5$-subgroup is the group of norm-1 units of $\mathbb{Z}[\zeta_5]$, which by Dirichlet's unit theorem is isomorphic to 
\[
\ker\left(\mathbb{Z}[\zeta_5]^\times\to\mathbb{Z}^\times\right)\cong
\mathbb{Z}/10\mathbb{Z}\times\mathbb{Z}.
\]
As in \cite{sl2ff}*{Section 5}, the normalizer is an extension of the centralizer by an action of the stabilizer of the corresponding oriented module in the Galois group. We noted above that the Galois group 
$\mathbb{Z}/4\mathbb{Z}$ exchanges the two orientations of the trivial module, hence the stabilizer is the subgroup $\mathbb{Z}/2\mathbb{Z}\subset \mathbb{Z}/4\mathbb{Z}$. 
The normalizer therefore is of the form 
$
\left(\mathbb{Z}/10\mathbb{Z}\times \mathbb{Z}\right)\rtimes\mathbb{Z}/2\mathbb{Z}.
$
The action of $\mathbb{Z}/2\mathbb{Z}$ on $\mathbb{Z}/10\mathbb{Z}$ is by multiplication with $-1$ because the action of the Galois group is via the identification $\mathbb{Z}/4\mathbb{Z}\cong\mathbb{Z}/5\mathbb{Z}^\times$. The action of $\mathbb{Z}/2\mathbb{Z}$ on $\mathbb{Z}$ is trivial: the full Galois group acts on $\mathbb{Z}$ via a surjective homomorphism $\mathbb{Z}/4\mathbb{Z}\to\mathbb{Z}^\times\cong\mathbb{Z}/2\mathbb{Z}$. The stabilizer of the oriented module in the Galois group lies in the kernel of the above action, as claimed. Therefore, the normalizer is in fact of the form $\op{D}_{10}\times\mathbb{Z}$. 

Applying the formulas from \cite{sl2ff}*{Section 3}, the Farrell--Tate cohomology of the normalizer is of the form $\mathbb{F}_5[a_2^{\pm 2}](b_1^3)^{\oplus 2}\oplus \mathbb{F}_5[a_2^{\pm 2}](b_1^3)_{-1}^{\oplus 2}$ where the lower subscript $-1$ indicates a degree shift by $-1$. The Hilbert--Poincar{\'e} series for the positive degrees is 
$
\frac{T^3+2T^4+T^5}{1-T^4}=\frac{T^3(1+T)^2}{1-T^4}.
$

The computations in \cite{dutour:ellis:schuermann} show that the $5$-torsion in integral homology of $\op{PSL}_4(\mathbb{Z})$ of dimension 1 in degrees $0,3\bmod 4$ and trivial otherwise. By the universal coefficient theorem, this agrees with the above computation.

\section{Necessity of Rigidity for Bredon homology}
\label{2PEV}

From a non-rigid cell complex, i.e., a cell complex where cell stabilizers do not necessarily fix the corresponding cell pointwise, one can compute classical group homology via the equivariant spectral sequence with coefficients in the orientation module. Such an orientation module, where elements of the stabilizer group act by multiplication with $1$ or $-1$, depending on whether they preserve or reverse the orientation of the cell, cannot exist for Bredon homology. We make this precise in the following statement:

\begin{proposition}
 There is no module-wise variation of the Bredon module with coefficients in the complex representation ring and with respect to the system of finite subgroups
 such that Bredon homology can be computed from a non-rigid cell complex.
\end{proposition}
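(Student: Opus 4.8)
The plan is to refute the statement by exhibiting a single minimal example in which any cell‑by‑cell coefficient assignment is forced, by a rank count, to give the wrong answer for $\Homol_0$. First I would record the baseline fact: for a finite group $G$ a point is a model for $\underbar{\rm E}G$, so $\Homol_0^\mathfrak{Fin}(\underbar{\rm E}G;R_\C)\cong R_\C(G)$ and $\Homol_n^\mathfrak{Fin}(\underbar{\rm E}G;R_\C)=0$ for $n>0$. Taking $G=\Ctwo=\langle s\rangle$, this gives $\Homol_0\cong\Z^2$ (one summand for each complex irreducible of $\Ctwo$) and $\Homol_n=0$ for $n>0$.

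Next I would set up the example: let $\Ctwo$ act on $X=[-1,1]$ by $s\colon x\mapsto -x$, with the \emph{non-rigid} $\Gamma$-cell structure whose $0$-cells are $\{-1\}$ and $\{1\}$ and whose single $1$-cell is $e=[-1,1]$. Since $X$ is contractible and $X^{\Ctwo}=\{0\}$ is contractible, $X$ is a model for $\underbar{\rm E}(\Ctwo)$; it fails to be rigid because $s\cdot e=e$ while $s$ does not fix $e$ pointwise. There is exactly one orbit of $0$-cells, with trivial stabilizer, and one orbit of $1$-cells, with stabilizer $\Ctwo$ acting on $e$ by reflection. (As a sanity check on the ``correct'' value, its rigid facets subdivision inserts the vertex $\{0\}$ and yields the ordinary Bredon chain complex $R_\C(1)\to R_\C(\Ctwo)\oplus R_\C(1)$, with differential an induction map together with $\pm\op{id}$, whose cokernel is $R_\C(\Ctwo)\cong\Z^2$ in degree $0$ and which is exact in positive degrees.)

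The crux is to make precise, and then exploit, what a ``module-wise variation'' can be. By this I understand a \emph{local} rule that assigns to each cell $\sigma$ --- depending only on its stabilizer $\Gamma_\sigma$ together with the $\Gamma_\sigma$-action on $\sigma$ --- a coefficient datum producing the degree-$n$ term $C_n=\bigoplus_{[\sigma]\in X_n/\Gamma}M(\sigma)$ of a chain complex (or, more generally, the modules $M_q(\sigma)$ making up an $E_1$-page $E^1_{p,q}=\bigoplus_{[\sigma]\in X_p/\Gamma}M_q(\sigma)$ of a convergent spectral sequence), \emph{subject to} the requirement that, applied to rigid complexes, it reproduce the usual Bredon chain complex, hence the usual Bredon homology. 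Locality forces the value on a cell with trivial stabilizer: applying the rule to a free $\Gamma$-CW complex $Y$ must recover $\Homol_\ast^\mathfrak{Fin}(Y;R_\C)\cong\Homol_\ast(Y/\Gamma;\Z)$, and specializing to $Y=\Gamma$ (a single free $0$-cell orbit) forces $M(\sigma)=R_\C(1)=\Z$ (and $M_q(\sigma)=0$ for $q>0$) whenever $\Gamma_\sigma$ is trivial. Now apply the rule to the non-rigid $X$ above: the $0$-cell orbit contributes only a single copy of $\Z$ in total degree $0$, so $\Homol_0^\mathfrak{Fin}(X;R_\C)$ is computed as the cokernel of $d_1\colon C_1\to C_0=\Z$ (resp. as the subquotient $E^\infty_{0,0}\subseteq E^1_{0,0}=\Z$), which has $\Z$-rank at most $1$. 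But $\Homol_0^\mathfrak{Fin}(\underbar{\rm E}(\Ctwo);R_\C)\cong\Z^2$ has rank $2$ --- contradiction. Hence no such variation exists.

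I expect the only real obstacle to be the formalization in the previous paragraph: one must insist both on locality and on compatibility with the ordinary Bredon construction on rigid complexes, since without these the proposition would be either vacuous or trivially false. Once that is fixed, the $\Homol_0$-rank obstruction does everything. Conceptually, the representation-ring coefficients record the entire stabilizer group at a fixed cell, whereas a non-rigid cell structure has already identified pairs of lower-dimensional cells whose individual stabilizers carried precisely that extra rank, and relabelling the fewer surviving cells cannot recreate it. This is in sharp contrast with ordinary group homology, where $\Homol_0^\Gamma(X;\Z)\cong\Homol_0(X/\Gamma;\Z)$ has rank bounded by the number of $0$-cell orbits irrespective of rigidity --- which is exactly why the orientation-module equivariant spectral sequence does the job there.
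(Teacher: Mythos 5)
Your proof is correct and rests on exactly the same mechanism as the paper's: exhibit a non-rigid model of a classifying space for proper actions in which an edge is folded onto itself by its stabilizer, and observe that the degree-zero chain group of any cell-by-cell coefficient assignment is then too small in rank to surject onto the known $\Homol_0^\mathfrak{Fin}(\underbar{\rm E}G;R_\C)$. The differences are in the example and in the bookkeeping. The paper takes $G=\op{PSL}_2(\Z)$ acting on the modular tree, compares the strict fundamental domain $\Z/3\edgegraph\Z/2$ (giving $\Homol_0\cong\Z^4$) with the non-strict one $\Z/3\doubleedgegraph\Z/3$ whose degree-zero term is only $R_\C(\Z/3)\cong\Z^3$; you take $G=\Ctwo$ acting on an interval by reflection, where the correct answer $R_\C(\Ctwo)\cong\Z^2$ is immediate from the one-point model and the non-rigid structure leaves only a single free vertex orbit contributing $\Z$. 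Your example is more minimal and needs no input from Bass--Serre theory; it also makes explicit, via the locality argument on free complexes (or, even more directly, via the observation that $0$-cells are automatically rigid so their coefficient is forced to be $R_\C(1)$), what ``module-wise variation'' must mean --- a point the paper leaves implicit by only decorating the edge module with a tilde. One cosmetic remark: $E^\infty_{0,0}$ is a quotient of $E^1_{0,0}$ rather than a submodule, but the rank bound you need holds either way.
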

 
 \begin{proof}
 We provide a counterexample in order to contradict the possibility to compute the Bredon homology from an arbitrary non-rigid cell complex.
 
Consider the classical modular group $\op{PSL}_2(\Z)$. A model for ${\rm \underbar{E}PSL}_2(\Z)$ is given by the modular tree~\cite{trees}. There is a rigid cell complex structure $T_1$ on it, given as follows.
By~\cite{trees}, the modular tree admits a strict fundamental domain for $\op{PSL}_2(\Z)$, of the shape 
$$\Z/3\Z \edgegraph \Z/2\Z$$
with vertex stabilizers as indicated and trivial edge stabilizer. We obtain the cell complex $T_1$ by tessellating the modular tree with the $\op{PSL}_2(\Z)$-images of this fundamental domain. Obviously, $T_1$ is rigid, and it yields the Bredon chain complex
$$ 
0 \to R_\C(\langle 1 \rangle) \stackrel{d}{\longrightarrow} R_\C(\Z/2\Z) \oplus R_\C(\Z/3\Z) \to 0.
$$
The map $d$ in the above Bredon chain complex is injective, and as $R_\C(\Z/n\Z) \cong \Z^n$, we read off
$$ 
\Homol_1^\mathfrak{Fin}( \underbar{\rm E}{\rm PSL}_2(\Z); \thinspace R_\C) = 0,
\qquad
\Homol_0^\mathfrak{Fin}( \underbar{\rm E}{\rm PSL}_2(\Z); \thinspace R_\C) \cong \Z^4.
$$    

Now we equip the modular tree with an alternative equivariant cell structure $T_2$, induced by the non-strict fundamental domain 
$$
\Z/3\Z \doubleedgegraph \Z/3\Z
$$
where the (set-wise) edge stabilizer is $\Z/2\Z$, flipping the edge onto itself. It can be seen as a ramified double cover of the fundamental domain for $T_1$ discussed above. A system of representative cells for $T_2$ is given by the edge of double length, and one vertex of stabilizer type $\Z/3\Z$. This yields a chain complex
$$ 
0 \to \widetilde{R_\C(\Z/2\Z)} \to R_\C(\Z/3\Z) \to 0,
$$
where the tilde could be any construction which takes the non-trivial $\Z/2\Z$-action on the edge of double length into account (similar to the coefficients in the orientation module for group homology computed from non-rigid cell complexes). But no matter how this construction is done, from $R_\C(\Z/3\Z) \cong \Z^3$, we can never reach $\Homol_0^\mathfrak{Fin}( \underbar{\rm E}{\rm PSL}_2(\Z); \thinspace R_\C) \cong \Z^4.$ Hence $T_2$ is our desired counterexample.
\end{proof}

\begin{remark}
 We could of course drop the condition ``module-wise'' in the above proposition, and investigate
 whether there is a reasonable construction which maps the representation ring to a complex  of modules and yields a quasi-isomorphism from the total complex to the Bredon complex for the subdivided tree. 
 But with such a construction, one would only superficially hide the fact that
 one needs to know how to subdivide in order to get the constructed complexes right. 
 This means that it will not be practicable to compute Bredon homology with respect to the system of finite subgroups and coefficients in the complex representation ring 
 without subdividing the cell complex under consideration to make it rigid.
\end{remark}

\begin{bibdiv}
 \begin{biblist}
 \bib{BCNS}{article}{
    Author = {Braun, Oliver},
  author =  {{Coulangeon}, Renaud},
  author =  {Nebe, Gabriele},
  author =  {{Sch\"onnenbeck}, Sebastian},
    Title = {{Computing in arithmetic groups with Vorono\"{\i}'s algorithm}},
    Journal = {{J. Algebra}},
    Volume = {435},
    Pages = {263--285},
    Year = {2015},
   review={~Zbl 1323.16014}
}
\bib{Brown}{book}{
   author={Brown, Kenneth S.},
   title={Cohomology of groups},
   series={Graduate Texts in Mathematics},
   volume={87},
   note={Corrected reprint of the 1982 original},
   publisher={Springer-Verlag, New York},
   date={1994},
   pages={x+306},
   isbn={0-387-90688-6},
   review={\MR{1324339}},
}

\bib{brownstein:lee}{article}{
    AUTHOR = {Brownstein, Alan},
    author = {Lee, Ronnie},
     TITLE = {Cohomology of the symplectic group {${\rm Sp}_4({\bf Z})$}.
              {I}. {T}he odd torsion case},
   JOURNAL = {Trans. Amer. Math. Soc.},
    VOLUME = {334},
      YEAR = {1992},
    NUMBER = {2},
     PAGES = {575--596},
      ISSN = {0002-9947},
       DOI = {10.2307/2154473},
       URL = {http://dx.doi.org/10.2307/2154473},
}
		
\bib{BuiEllis}{book}{
  author =  {Bui, Anh Tuan},
  author =  {Ellis, Graham J.},
    title =   {Computing Bredon homology of groups},
   journal={J. Homotopy Relat. Struct.},
   volume={11},
   date={2016},
   number={4},
   pages={715--734},
   issn={2193-8407},
   review={\MR{3578995}},
   doi={10.1007/s40062-016-0146-y},
}
\bib{Bui}{book}{
  author =  {Bui, Anh Tuan},
    author =  {Rahm, Alexander D.},
    title =   {Torsion Subcomplexes Subpackage, version 2.1},
  address = {accepted sub-package in HAP (Homological Algebra Programming) in the computer algebra system GAP},
  year =    {2018, Source code available at
  \url{http://math.uni.lu/~rahm/subpackage-documentation/} \qquad ${}$},
}
\bib{dutour:ellis:schuermann}{article}{
   author = { {Dutour~Sikiri\'c}, Mathieu },
  author =  {Ellis, Graham J.},
   author = { {Schuermann}, Achill },
   Title = {{On the integral homology of $\text{PSL}_4(\mathbb Z)$ and other arithmetic groups}},
    Journal = {{J. Number Theory}},
    Volume = {131},
    Number = {12},
    Pages = {2368--2375},
    Year = {2011},
   review={~Zbl 1255.11028}
}
\bib{DGGHSY}{article}{
   author = { {Dutour~Sikiri\'c}, Mathieu },
   author = {{Gangl}, Herbert},
   author = {{Gunnells}, Paul~E.},
   author = {{Hanke}, Jonathan },
   author = { {Schuermann}, Achill },
   author = { {Yasaki}, Dan},
   title = {On the cohomology of linear groups over imaginary quadratic fields},
  journal = {to appear in Journal of Pure and Applied Algebra, arXiv: 1307.1165},
     year = {2016},
}
\bib{HAP}{incollection}{
      author={Ellis, Graham},
       title={Homological algebra programming},
        date={2008},
   booktitle={Computational group theory and the theory of groups},
      series={Contemp. Math.},
      volume={470},
   publisher={Amer. Math. Soc.},
     address={Providence, RI},
       pages={63\ndash 74},
      review={\MR{2478414 (2009k:20001)}, \textit{implemented in the HAP package in the GAP computer algebra system}},
}
\bib{EGS}{article}{
    Author = {Philippe {Elbaz-Vincent} and Herbert {Gangl} and Christophe {Soul\'e}},
    Title = {{Perfect forms, $K$-theory and the cohomology of modular groups}},
    Journal = {{Adv. Math.}},
    Volume = {245},
    Pages = {587--624},
    Year = {2013},
    DOI = {10.1016/j.aim.2013.06.014},
   review={~Zbl 1290.11104}
}
\bib{Hegarty}{article}{
    Author = {Ellis, Graham},
  author={Hegarty, Fintan},
    Title = {{Computational homotopy of finite regular CW-spaces.}},
    Journal = {{J. Homotopy Relat. Struct.}},
    Volume = {9},
    Number = {1},
    Pages = {25--54},
    Year = {2014},
    Publisher = {Springer, Berlin/Heidelberg; Georgian Academy of Sciences, A. Razmadze Mathematical Institute, Tbilisi},
   review={~Zbl 1311.55008}
}

\bib{MacPhersonMcConnell}{article}{
   author={MacPherson, Robert},
   author={McConnell, Mark},
   title={Explicit reduction theory for Siegel modular threefolds},
   journal={Invent. Math.},
   volume={111},
   date={1993},
   number={3},
   pages={575--625},
   issn={0020-9910},
   review={\MR{1202137}},
   doi={10.1007/BF01231300},
}

\bib{MislinValette}{collection}{
   author={Mislin, Guido},
   author={Valette, Alain},
   title={Proper group actions and the Baum--Connes conjecture},
   series={Advanced Courses in Mathematics. CRM Barcelona},
   publisher={Birkh\"auser Verlag},
   place={Basel},
   date={2003},
   pages={viii+131},
   isbn={3-7643-0408-1},
   review={\MR{2027168 (2005d:19007)}},
   review={~Zbl 1028.46001},
}
\bib{accessingFarrell}{article}{
   author={Rahm, Alexander D.},
   title={Accessing the cohomology of discrete groups above their virtual cohomological dimension},
   journal={J. Algebra},
   volume={404},
   date={2014},
   pages={152--175},
   issn={0021-8693},
   review={\MR{3177890}},
}
\bib{BianchiGroups}{article}{
author =  {Rahm, Alexander D.} ,
   title =   {On the equivariant $K$-homology of PSL$_2$ of the imaginary quadratic integers},
journal={Annales de l'Institut Fourier}, 
volume={66},
number={4},
year={2016}, 
pages={1667--1689},
}
\bib{sl2ff}{book}{
  author =  {Rahm, Alexander D.},
    author =  {Wendt, Matthias},
   title={On Farrell-Tate cohomology of $\rm SL_2$ over $S$-integers},
   journal={J. Algebra},
   volume={512},
   date={2018},
   pages={427--464},
   issn={0021-8693},
   review={\MR{3841530}},
   doi={10.1016/j.jalgebra.2018.06.031},
}
\bib{reiner:1955}{article}{
   author={Reiner, Irving},
   title={Integral representations of cyclic groups of prime order},
   journal={Proc. Amer. Math. Soc.},
   volume={8},
   date={1957},
   pages={142--146},
   issn={0002-9939},
   review={\MR{0083493}},
}
\bib{Sanchez-Garcia}{article}{
      author={S{\'a}nchez-Garc{\'{\i}}a, Rub{\'e}n~J.},
       title={Bredon homology and equivariant ${K}$-homology of ${\rm {S}{L}
  }(3,{\mathbb{ {Z} }})$},
        date={2008},
     journal={J. Pure Appl. Algebra},
      volume={212},
      number={5},
       pages={1046\ndash 1059},
      review={\MR{2387584 (2009b:19007)}},
}
\bib{Sanchez-Garcia_Coxeter}{article}{
   author={S{\'a}nchez-Garc{\'{\i}}a, Rub{\'e}n J.},
   title={Equivariant $K$-homology for some Coxeter groups},
   journal={J. Lond. Math. Soc. (2)},
   volume={75},
   date={2007},
   number={3},
   pages={773--790},
   issn={0024-6107},
   review={\MR{2352735 (2009b:19006)}},
   doi={10.1112/jlms/jdm035},
}

\bib{Sebastian}{article}{
      author={Sch\"{o}nnenbeck, Sebastian},
   title={Resolutions for unit groups of orders},
   journal={J. Homotopy Relat. Struct.},
   volume={12},
   date={2017},
   number={4},
   pages={837--852},
   issn={2193-8407},
   review={\MR{3723461}},
   doi={10.1007/s40062-016-0167-6},
}

\bib{trees}{book}{
   author={Serre, Jean-Pierre},
   title={Trees},
   series={Springer Monographs in Mathematics},
   note={Translated from the French original by John Stillwell;
   Corrected 2nd printing of the 1980 English translation},
   publisher={Springer-Verlag, Berlin},
   date={2003},
   pages={x+142},
   isbn={3-540-44237-5},
   review={\MR{1954121}},
}
\bib{Soule}{article}{
   author={Soul{\'e}, Christophe},
   title={The cohomology of ${\rm SL}_{3}({\bf Z})$},
   journal={Topology},
   volume={17},
   date={1978},
   number={1},
   pages={1--22},
   issn={0040-9383},
}

\end{biblist}
\end{bibdiv}

\end{document}